\newtheorem{teo}{Theorem}[section]
\newtheorem{prop}[teo]{Proposition}
\newtheorem{lem}[teo]{Lemma}
\newtheorem{cor}[teo]{Corollary}
\newtheorem{defini}[teo]{Definition}
\newtheorem{rem}[teo]{Remark}
\newtheorem{con}[teo]{Condition}
\newtheorem{conv}[teo]{Convention}
\newcommand{\GL}{{\rm GL}}
\newcommand{\SL}{{\rm SL}}
\newcommand{\Sh}{{\rm Sh}}
\newcommand{\Th}{{\rm Th}}
\newcommand{\Aut}{{\rm Aut}}
\newcommand{\Res}{{\rm Res}}
\newcommand{\MT}{{\rm MT}}
\newcommand{\ab}{{\rm ab}}
\newcommand{\ad}{{\rm ad}}
\newcommand{\FF}{{\mathbb F}}
\newcommand{\CC}{{\mathbb C}}
\newcommand{\RR}{{\mathbb R}}
\newcommand{\ZZ}{{\mathbb Z}}
\newcommand{\QQ}{{\mathbb Q}}
\newcommand{\NN}{{\mathbb N}}
\newcommand{\HH}{{\mathbb H}}
\newcommand{\GG}{{\mathbb G}}
\newcommand{\SSS}{{\mathbb S}}
\newcommand{\AAA}{{\mathbb A}}
\newcommand{\cG}{{\cal G}}
\newcommand{\cR}{{\cal R}}
\newcommand{\cL}{{\mathcal L}}
\newcommand{\OOO}{{\cal O}}
\newenvironment{prf}[1]{\trivlist
\item[\hskip \labelsep{\it
#1.\hspace*{.3em}}]}{~\hspace{\fill}~$\square$\endtrivlist}
\newenvironment{proof}{\begin{prf}{\bf Proof}}{\end{prf}}
\title{Categoricity of modular and Shimura curves}
\author{Christopher Daw and Adam Harris}
\date{\today}
\begin{document}
\maketitle

\begin{abstract}
We describe a model-theoretic setting for the study of Shimura varieties, and study the interaction between model theory and arithmetic geometry in this setting. In particular, we show that the model-theoretic statement of a certain $\cL_{\omega_1, \omega}$-sentence having a unique model of cardinality $\aleph_1$ is equivalent to a condition regarding certain Galois representations associated with Hodge-generic points. We then show that for modular and Shimura curves this $\cL_{\omega_1, \omega}$-sentence has a unique model in every infinite cardinality. In the process, we prove a new characterisation of the special points on any Shimura variety.
\end{abstract}

\section{Introduction}
Consider a connected component of a Shimura variety, which by definition is the quotient of a hermitian symmetric domain $X^+$ by a congruence subgroup $\Gamma$. Baily and Borel proved in \cite{BB} that automorphic forms on $X^+$ realise the quotient as a quasi-projective algebraic variety. This variety possesses a canonical model $S$ over some finite abelian extension of a number field $E$, where $E$ is independent of $\Gamma$ and referred to as the {\it reflex field}. We denote by $E^{\ab}$ the maximal abelian extension of $E$ and by $E^{\ab}(\Sigma)$ the field obtained by adjoining the coordinates of the images in $S(\CC)$ of the {\it special points} (see Section \ref{special} for the definition of a special point). Provided that $\Gamma$ is sufficiently small, $X^+$ is the universal cover of $S(\CC)$ and $\Gamma$ is the fundamental group. 

By a {\it modular curve} we refer to the connected components of Shimura varieties arising as quotients of the upper half-plane $\HH$ by congruence subgroups of $\GL_2(\QQ)$.  By a {\it Shimura curve} we refer to those quotients of $\HH$ by congruence subgroups of $G(\QQ)$, where $G:=\Res_{B/\QQ}\GG_{m,B}$ for a quaternion division algebra $B$ over $\QQ$ split at infinity. In other words,
\begin{align*}
G(R)=(B\otimes R)^{\times}
\end{align*}
for all $\QQ$-algebras $R$. Since $B$ splits over $\RR$, $G_\RR$ is isomorphic to $\GL_{2,\RR}$ and hence $G(\RR)^+$ acts on $\HH$. In both cases, $E=\QQ$ and we recall that $\QQ^{\ab}$ is the field extension of $\QQ$ obtained by adjoining all roots of unity. 

In general, one always has a reductive algebraic group $G$ defined over $\QQ$ such that $G^{\ad}(\RR)^+$ acts on $X^+$ as holomorphic isometries, where we write $G^{\ad}$ for $G$ modulo its centre. 
However, a principal theme of this paper is that a great deal of information is encoded in the action of the countable subgroup 
\begin{align*}
G^{\ad}(\QQ)^+:=G^{\ad}(\QQ)\cap G^{\ad}(\RR)^+.
\end{align*}
More precisely, consider a simplification of the above situation, given by
\begin{align*}
p:X^+\rightarrow S(\CC),
\end{align*}
where $X^+$ is now simply a set endowed with its action of $G^{\ad}(\QQ)^+$, $S$ is an algebraic variety defined over $E^{\ab}(\Sigma)$ and $p$ is a set-theoretic map that satisfies three key conditions that for now we will refer to as the {\it standard fibres} condition, the {\it special points} condition and the {\it modularity} condition (see Section \ref{axioms} for the special points and modularity conditions and Section \ref{cat} for the standard fibres condition). In this paper we prove that this information is enough to characterise $p$ up to isomorphism:

\begin{teo}\label{main}
Let $S$ be a modular or Shimura curve and suppose that
\begin{align*}
q:X^+\rightarrow S(\CC)
\end{align*}
satisfies the standard fibres, special points and modularity conditions. Then there exists a commutative diagram
\begin{center}
$\begin{CD}
X^+ @>\varphi>> X^+\\
@VVp V @VVq V\\
S(\CC) @>\sigma>> S(\CC),
\end{CD}$
\end{center}
where $\varphi$ is a $G^{\ad}(\QQ)^+$-equivariant bijection and $\sigma$ is an automorphism of $\CC$ fixing $E^{\ab}(\Sigma)$.
\end{teo} 

One can arguably phrase this problem more naturally in terms of model theory: consider two-sorted structures of the form
\begin{align*}
\langle\textbf{D},\textbf{S},q\rangle,
\end{align*}
where $\textbf{D}$ comprises a set endowed with a group action (see Section \ref{modeltheory} for more details), $\textbf{S}$ comprises an algebraic variety over a countable field $k$ with points in some algebraically closed field $F$, which we intepret in the field sort corresponding to $F$ with constants $k$, and $q$ is a function from $\mathbf{D}$ to $\mathbf{S}$. It follows that any connected component of a Shimura variety gives rise to such a structure. In a suitable language $\mathcal{L}$, we consider the complete first order theory of this structure and adjoin the $\mathcal{L}_{\omega_1,\omega}$ standard fibres condition, which says that the fibres of $q$ carry a transitive action of $\Gamma$. Theorem \ref{main} follows from the model-theoretic statement that this sentence is $\kappa$-{\it categorical} for all infinite cardinalities $\kappa$. Our Theorem \ref{mainmodel} is the statement that, if we additionally include the axiom that $F$ has infinite transcendence degree, this holds for any modular or Shimura curve. In Corollary \ref{com}, we prove that our axiomatisation for the first order theory of this structure is complete and satisfies quantifier elimination. 

{\it The key theme of this paper, however, is the interaction between model theory and classical results from arithmetic geometry.} The difficulty in proving Theorem \ref{main} is that mapping one point from $X^+$ to another maps its entire $G^{\ad}(\QQ)^+$ orbit (otherwise known as its {\it Hecke orbit}). This must be accounted for in the algebraic variety by a field automorphism, which is possible given certain open image theorems concerning the Galois representations associated with points on the Shimura variety or products thereof. Loosely speaking, these imply that a field automorphism can account for all but a finite number of constraints in the Hecke orbit. The remaining constraints are then accounted for via a judicious choice of the map in the first place. Particularly striking, however, is that categoricity actually implies these arithmetic statements via a model-theoretic result of Keisler (see \cite{K1}), restricting the number of types in models of $\aleph_1$-categorical $\mathcal{L}_{\omega_1,\omega}$-sentences. This implication is Theorem \ref{OI}.

Our work is motivated by earlier results of this type concerning the universal covers of the multiplicative group (see \cite{Z1} and \cite{BZ}), an elliptic curve (see \cite{Mi}) and semi-abelian varieties (see \cite{Z}), originating in a programme of Boris Zilber. Our proof follows the abstract strategy outlined in the aforementioned articles and relies on, essentially as a black box, the theory of quasi-minimal excellence as in \cite{BHHKK} and \cite{K}. We note that Gavrilovich first posed the question for Shimura curves in \cite{Mi}, Conjecture 9.

Our restriction to the dimension $1$ case is due to the abundance of Mumford-Tate groups that arise in higher dimensions. In dimension $1$, the only proper Mumford-Tate groups come from the special points and a key model-theoretic observation of this paper is that, in our setup, the special points always belong to the definable closure of the empty set. This follows from our Theorem \ref{dcl} that any special point is always the unique fixed point of some element of $G^{\ad}(\QQ)^+$. We feel that this result is of independent interest. In general, proper Mumford-Tate groups impose restrictions on the corresponding Galois representations and our language is simply not sophisticated enough to handle this additional level of complexity; in higher dimensional cases the model-theoretic setup will require refinement.

\section{Shimura varieties}

For further details regarding Shimura varieties, we refer the reader to \cite{M}. We denote by $\AAA_f$ the finite rational adeles and by $\hat{\ZZ}$ the product of $\ZZ_p$ over all primes $p$. 

Consider a Shimura datum $(G,X)$. By defintion, it consists of a reductive algebraic group $G$ over $\QQ$ and the $G(\RR)$-conjugacy class $X$ of a homomorphism
\begin{align*}
 h:\SSS\rightarrow G_{\RR}
\end{align*}
satisfying the three axioms SV1-SV3 of \cite{M}, p50. Let $X^+$ be a connected component of $X$; it is known that its stabiliser in $G^{\ad}(\RR)$ is $G^{\ad}(\RR)^+$. Let $K$ be a compact open subgroup of $G(\AAA_f)$ and let $\cal{C}$ be a set of representatives for the finite set
\begin{align*}
G(\QQ)_+\backslash G(\AAA_f)/K,
\end{align*}
where $G(\QQ)_+$ is the inverse image of $G^{\ad}(\QQ)^+$ in $G(\QQ)$ i.e. the stabiliser of $X^+$ in $G(\QQ)$. Note that the action of $G(\QQ)_+$ on $X^+$ factors through $G^{\ad}(\QQ)^+$. Therefore, the elements of $G^{\ad}(\QQ)^+$ may be thought of as functions from $X^+$ to itself.

The double coset space
\begin{align*}
\Sh_K(G,X):=G(\QQ)\backslash(X\times (G(\AAA_f)/K)),
\end{align*}
where $G(\QQ)$ acts diagonally, is equal to the finite disjoint union 
\begin{align*}
\coprod_{g\in\cal{C}}\Gamma_g\backslash X^+,
\end{align*}
where $\Gamma_g:= G(\QQ)_+\cap gKg^{-1}$ and, when the $\Gamma_g$ are sufficiently small, the members of this union have canonical realisations as quasi-projective varieties over $\CC$. In fact, by \cite{M}, Theorem 3.12, the $\Gamma_g$ are sufficiently small when their images in $G^{\ad}(\QQ)^+$ are torsion free. Therefore, $\Sh_K(G,X)$ may always be realised as a quasi-projective variety over $\CC$ and, by work of Shimura, Deligne, Borovoi and Milne, it possesses a canonical model over its reflex field $E:=E(G,X)$.

Now let $\Gamma$ denote $\Gamma_g$ for some $g\in\mathcal{C}$ and consider the connected component $\Gamma\backslash X^+$. We lose no generality if we assume that $g$ is the identity. Let $S$ be the corresponding irreducible component of the canonical model, which is always defined over $E^{\ab}$. Let
\begin{align*}
p:X^+\rightarrow S(\CC)
\end{align*}
denote the surjective map from $X^+$ to $S(\CC)$ given by automorphic forms. 

\begin{conv}
By a Shimura variety we henceforth refer to an $S$ as constructed above. Furthermore, we will assume that the image of $\Gamma$ in $G^{\ad}(\QQ)^+$ is torsion free until we address this case in Section \ref{red}.
\end{conv}

\subsection{Modular curves}

For $G$ take $\GL_2$ and for $X$ take the $\GL_2(\RR)$-conjugacy class of the morphism $h:\SSS\rightarrow\GL_{2,\RR}$ defined by
\begin{align*}
a+ib\mapsto\left(\begin{array}{cc} a & -b\\ b & a\end{array}\right).
\end{align*}
This gives a Shimura datum and the corresponding reflex field is $\QQ$, whose maximal abelian extension is the field obtained by adjoining all roots of unity.

The set $X$ is naturally in bijection with the union of the upper and lower half planes in $\CC$ and for $X^+$ we take the upper half plane $\HH$. The stabiliser of $\HH$ in $\GL_2(\QQ)$ is the subgroup $\GL_2(\QQ)^+$ of matrices with positive determinant. 

For $K$ we take any compact open subgroup of $\GL_2(\AAA_f)$, which we assume to be contained in $\GL_2(\hat{\ZZ})$. The quotient of $\HH$ by $\Gamma:=K\cap\GL_2(\QQ)^+$ has the structure of an algebraic curve with a canonical model $S$ over $\QQ^{\ab}$. We refer to $S$ as a {\it modular curve}. Consider the following choices for $K$:

\begin{enumerate}

\item[(1)] $K(N):=\prod_pK(N)_p$, where
\begin{align*}
K(N)_p:=\left\{g\in\GL_2(\mathbb{Z}_p)\mid g\equiv \left(\begin{array}{cc} 1 & 0\\ 0 & 1\end{array}\right)\hbox{ mod }p^{{\rm ord}_p(N)}\right\}.
\end{align*}
We have $K(N)\cap\GL_2(\QQ)=\Gamma(N)$, where
\begin{align*}
\Gamma(N)=\left\{\gamma\in\SL_2(\ZZ)\mid \gamma\equiv \left(\begin{array}{cc} 1 & 0\\ 0 & 1\end{array}\right)\hbox{ mod }N\right\},
\end{align*}
the principal congruence subgroup of level $N$ (note that any compact open subgroup $K$ of $\GL_2(\AAA_f)$ contains a $K(N)$ with finite index). Then $S(\CC)$ is the moduli space for isomorphism classes of elliptic curves with level-$N$ structure. 

\item[(2)] $K_0(N):=\prod_pK_0(N)_p$, where
\begin{align*}
K_0(N)_p:=\left\{g\in\GL_2(\mathbb{Z}_p)\mid g\equiv \left(\begin{array}{cc} * & *\\ 0 & *\end{array}\right)\hbox{ mod }p^{{\rm ord}_p(N)}\right\}.
\end{align*}
We have $K_0(N)\cap\GL_2(\QQ)=\Gamma_0(N)$, where
\begin{align*}
\Gamma_0(N)=\left\{\gamma\in\SL_2(\ZZ)\mid \gamma\equiv \left(\begin{array}{cc} * & *\\ 0 & *\end{array}\right)\hbox{ mod }N\right\}.
\end{align*}
Then $S(\CC)$ is the moduli space for isomorphism classes of elliptic curves with a cyclic subgroup of order $N$.

\item[(3)] $K_1(N):=\prod_pK_1(N)_p$, where
\begin{align*}
K_1(N)_p:=\left\{g\in\GL_2(\mathbb{Z}_p)\mid g\equiv \left(\begin{array}{cc} 1 & *\\ 0 & 1\end{array}\right)\hbox{ mod }p^{{\rm ord}_p(N)}\right\}.
\end{align*}
We have $K_1(N)\cap\GL_2(\QQ)=\Gamma_1(N)$, where
\begin{align*}
\Gamma_1(N)=\left\{\gamma\in\SL_2(\ZZ)\mid \gamma\equiv \left(\begin{array}{cc} 1 & *\\ 0 & 1\end{array}\right)\hbox{ mod }N\right\}.
\end{align*}
Then $S(\CC)$ is the moduli space for isomorphism classes of elliptic curves with an element of order $N$.

\item[(4)] When $N=1$ we have $K(N)=\GL_2(\hat{\ZZ})$, $\Gamma(N)=\SL_2(\ZZ)$ and $S=\AAA^1$. Furthermore, 
\begin{align*}
p:\HH\rightarrow\CC
\end{align*}
is the classical $j$-function. Of course, while we are imposing the torsion free condition on $\Gamma$, we are forced to assume that $N>1$. By \cite{M2}, p39, this condition is also sufficient.

\end{enumerate}

\subsection{Shimura curves}

Let $B$ be a quaternion division algebra over $\QQ$ split at infinity. Then $B$ splits at almost all primes $p$, namely those not dividing its discriminant. Let $G$ be the algebraic group defined by
\begin{align*}
G(R)=(B\otimes R)^{\times}
\end{align*}
for all $\QQ$-algebras $R$. Alternatively, we could write $G:=\Res_{B/\QQ}\GG_{m,B}$. 

We have that $G_{\QQ_p}$ is isomorphic to $\GL_{2,\QQ_p}$ for almost all primes. Since $B$ splits over $\RR$, $G_\RR$ is isomorphic to $\GL_{2,\RR}$ and so $G(\RR)$ acts on the union of the upper and lower half-planes. Therefore, $X$ and $X^+$ are taken as in the previous section and $(G,X)$ is again a Shimura datum.

Choosing a compact open subgroup $K$ of $G(\AAA_f)$, we obtain a congruence subgroup $\Gamma:=K\cap G(\QQ)_+$. The quotient $\Gamma\backslash\HH$ has the structure of a compact algebraic curve with a canonical model $S$. We refer to $S$ as a {\it Shimura curve}. It parametrises abelian varieties of dimension two with quaternionic multiplication and additional data. This is potentially a subset of a more general definition of Shimura curves since we have restricted to quaternion algebras over $\QQ$.

\subsection{Special points}\label{special}

Recall the definition of a special point:

\begin{defini}\label{sp}
Let $(G,X)$ be a Shimura datum. A point $x\in X$ is called special if there exists a subtorus $T$ of $G$ defined over $\QQ$ such that 
\begin{align*}
x:\SSS\rightarrow G_\RR
\end{align*}
factorises through $T_\RR$.
\end{defini}

We believe the following is a new characterisation of special points. It is essential for the purposes of this paper, but we also believe it should be interesting in its own right. For example, it implies that in certain realisations of $X^+$ special points are always algebraic (see \S1, \cite{DO}).

\begin{teo}\label{dcl}
Let $(G,X)$ be a Shimura datum and let $X^+$ be a connected component of $X$. For any $x\in X^+$ the following are equivalent:
\begin{enumerate}
\item[(1)] The point $x$ is special.
\item[(2)] There exists a $g\in G^{\ad}(\QQ)^+$ with the property that $x$ is the unique fixed point of $g$ in $X^+$.
\end{enumerate}
\end{teo}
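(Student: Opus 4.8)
The plan is to establish the equivalence by exploiting the correspondence between points of $X^+$ and Hodge structures, together with the structure of the groups that can fix a given $x$. Recall that $x \in X^+$ determines a homomorphism $x \colon \SSS \to G_\RR$, and the (real points of the) Mumford--Tate group $\MT(x)$ is the smallest $\QQ$-subgroup of $G$ whose base change to $\RR$ contains the image of $x$. The point $x$ is special precisely when $\MT(x)$ is a torus, i.e. when $\MT(x)(\RR)$ is commutative. The stabiliser of $x$ in $G^{\ad}(\RR)$ is a maximal compact subgroup $K_\infty$ of $G^{\ad}(\RR)$ (this is part of the structure of a hermitian symmetric domain: $X^+ \cong G^{\ad}(\RR)^+/K_\infty$), and elements of $G^{\ad}(\QQ)^+$ fixing $x$ are exactly the rational points of this compact group.

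For the implication $(1) \Rightarrow (2)$: suppose $x$ is special, so $T := \MT(x)$ is a torus through which $x$ factors. Then $T(\RR)$ lies in the stabiliser of $x$, i.e. in a maximal compact $K_\infty \subset G^{\ad}(\RR)$, and in fact $T^{\ad}(\RR)$ is a maximal torus of $K_\infty$ (it is a maximal compact torus because its centraliser, being a $\QQ$-group containing the image of $x$, must equal $T$ by minimality of the Mumford--Tate group). I would then pick a rational element $g \in T^{\ad}(\QQ)^+ \subset G^{\ad}(\QQ)^+$ that is "sufficiently generic", meaning $g$ generates a Zariski-dense subgroup of $T^{\ad}$; such $g$ exists because $T^{\ad}(\QQ)$ is Zariski-dense in $T^{\ad}$ (rational points are dense in any torus over $\QQ$, and a finitely generated dense subgroup can be found, then shrink to a single topological generator of the Zariski closure after passing to the connected component). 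The fixed points of $g$ in $X^+$ are the points $y$ whose stabiliser contains $g$; since $g$ is Zariski-dense in $T^{\ad}$, any such stabiliser contains $T^{\ad}(\RR)$, hence contains a maximal compact torus of $G^{\ad}(\RR)$. One then checks that $x$ is the \emph{unique} point of $X^+$ fixed by all of $T^{\ad}(\RR)$: this is because the fixed locus of a torus acting on the symmetric space is a totally geodesic submanifold, and a maximal compact torus has isolated (hence by connectedness unique, within a connected component and using that $X^+$ is contractible) fixed point. Concretely, if $y$ is fixed by $T^{\ad}(\RR)$ then $y \colon \SSS \to G_\RR$ commutes with $T$, so $y$ factors through the centraliser $Z_G(T)$; but one can arrange $T$ to be its own centraliser (a maximal torus containing a regular element), forcing $y = x$ up to the connected component constraint.

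For the converse $(2) \Rightarrow (1)$: suppose $x$ is the unique fixed point of some $g \in G^{\ad}(\QQ)^+$. Let $H$ be the Zariski closure in $G^{\ad}$ of the cyclic group generated by $g$; this is a commutative algebraic group defined over $\QQ$, and $g \in H(\QQ)$. Since $g$ lies in the compact stabiliser $K_\infty$ of $x$, the group $H(\RR)$ — or at least $H(\RR)^+$, the identity component — also fixes $x$ (as $g$ topologically generates a dense subgroup of $H(\RR)$ that stabilises $x$, and the stabiliser is closed). The key point is uniqueness: because $x$ is the \emph{only} fixed point of $g$, and any larger torus fixing $x$ would have many rational elements fixing $x$, one deduces that $H(\RR)^+$ must be "as large as possible" among commutative subgroups fixing $x$ — in particular $H^{\circ}$ contains a maximal torus of the reductive part of the centraliser. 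I would then argue that $x \colon \SSS \to G_\RR$ factors through the centraliser $Z_{G}(H)$, and that $Z_G(H)$, together with the condition that $x$ has no other fixed point for $g$, forces $\MT(x)$ to be contained in a torus, hence to \emph{be} a torus — so $x$ is special. The cleanest route: $x$ commutes with $g$ hence with $H$, so $\MT(x) \subseteq Z_G(H^{\circ})$; meanwhile if $\MT(x)$ were not a torus, its real points would be noncommutative and would contain a nontrivial connected subgroup commuting with $x$, producing a positive-dimensional family of points fixed by everything fixing $x$ — in particular a positive-dimensional fixed locus for $g$, contradicting uniqueness.

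The main obstacle I anticipate is the direction $(2) \Rightarrow (1)$, specifically controlling the passage from "$g$ has a unique fixed point" to "$\MT(x)$ is a torus". The subtlety is that $g$ might a priori be a torsion element or lie in a proper, possibly disconnected, algebraic subgroup whose real points are small, so one has to rule out the scenario where $g$ fixes $x$ uniquely merely by accident of the real topology while $\MT(x)$ remains large. The resolution should come from the fact that the fixed locus of $g$ in $X^+$ agrees with the fixed locus of the (real points of the) algebraic group $\overline{\langle g \rangle}$, which is a totally geodesic complex submanifold; uniqueness of the fixed point then forces this submanifold to be a point, and a dimension count comparing $\dim X^+$, $\dim K_\infty$, and the rank of the centraliser of $H$ pins down that the centraliser of $H$ in $G$ is a torus containing the image of $x$ — giving exactly the torus $T$ demanded by Definition~\ref{sp}. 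Making the "totally geodesic fixed locus" argument precise, and handling the adjoint-versus-simply-connected and connected-component bookkeeping carefully, is where the real work lies.
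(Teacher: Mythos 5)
Your argument for $(2)\Rightarrow(1)$ is essentially the paper's argument: from $g$ fixing $x$ one deduces that $g$ centralises the image of $\MT(x)$ in $G^{\ad}$ (the paper cites \cite{UY}, Lemma 2.2 for this), hence the $\MT(x)(\RR)$-orbit of $x$ inside $X^+$ is fixed pointwise by $g$, and uniqueness forces this orbit to be a single point, making $\MT(x)$ commutative, hence (being reductive) a torus. Your phrasing in terms of "a positive-dimensional family of fixed points" is a reasonable unpacking of the same idea.

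The $(1)\Rightarrow(2)$ direction, however, contains a genuine error. You set $T:=\MT(x)$ and then assert that $T^{\ad}(\RR)$ is a maximal torus of $K_\infty$, justifying it with "its centraliser, being a $\QQ$-group containing the image of $x$, must equal $T$ by minimality of the Mumford--Tate group." This is a non sequitur: minimality only gives $\MT(x)\subseteq Z_G(T)$, which is trivially true, and says nothing about the reverse inclusion. In fact the Mumford--Tate torus of a special point need not be a maximal torus of $G$ (think of a "diagonal" CM point in a product of modular curves, whose Mumford--Tate group is a proper subtorus with a large centraliser). The paper sidesteps this by first choosing a maximal torus $T$ of $G$ defined over $\QQ$ containing $\MT(x)$ and then working with $S:=\pi(T)\subseteq G^{\ad}$, for which the centraliser argument does hold. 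You do eventually write "one can arrange $T$ to be its own centraliser", which is the right idea, but it contradicts your earlier identification $T=\MT(x)$ and is not carried through. Separately, the claim that $x$ is the unique fixed point of $T^{\ad}(\RR)$ because it is "isolated hence by connectedness/contractibility unique" does not follow: isolated fixed points of a group action on a contractible space need not be unique. The paper establishes uniqueness via a careful normaliser argument (all maximal tori of $K_\infty$ are conjugate, the normaliser $N$ of $S(\RR)$ is a finite extension by the Weyl group, $N$ is compact hence contained in a conjugate of $K_\infty$, and a bootstrapping step then forces $N\subseteq K_\infty$). You explicitly flag this as where the real work lies, but as written the gap is substantive and the stated justification is incorrect.

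Finally, a small remark on your choice of $g$: you take $g$ topologically/Zariski generating $T^{\ad}$, whereas the paper takes $s\in S(\QQ)$ regular in the precise sense that $Z_{G^{\ad}}(s)=S$ (i.e. $s$ avoids the kernels of the roots). The paper's choice is what makes the final step clean: if $s$ fixes $x'$ then $\pi(x'(\SSS))\subseteq Z_{G^{\ad}}(s)=S$, so $S(\RR)$ fixes $x'$. Your Zariski-density formulation aims at the same conclusion but requires an additional argument that the stabiliser of a point $y\in X^+$ is the real-point group of a real algebraic subgroup and hence contains the Zariski closure of $\langle g\rangle$; this is true but should be said.
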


\begin{proof}
Firstly, assume that (2) holds for $h\in G^{\ad}(\QQ)^+$ and let $g\in G(\QQ)$ be a lift of $h$. Let $M:=\MT(x)$ be the Mumford-Tate group of $x$ i.e. the smallest algebraic subgroup $H$ of $G$ defined over $\QQ$ such that $x$ factors though $H_\RR$. By \cite{UY}, Lemma 2.2, since $g$ fixes $x$, $g$ belongs to the centraliser of $M(\QQ)$ in $G(\QQ)$. Let $X_M$ be the $M(\RR)$-conjugacy class of $x$ and let $X^+_M$ be a connected component of $X_M$ contained in $X^+$. Since $g$ fixes every element of $X^+_M$, it must contain only one element. Therefore, $M$ is commutative and, by the general theory of Shimura varieties, reductive. Hence, $M$ is a torus and $x$ is special.

Now assume that (1) holds. Therefore, 
\begin{align*}
x:\SSS\rightarrow G_{\RR}
\end{align*} 
factors through $T_{\RR}$, where $T$ is a maximal torus of $G$, defined over $\QQ$. Let $\pi:G\rightarrow G^{\ad}$ be the natural morphism and let $S:=\pi(T)$. Let $K_{\infty}$ be the connected maximal compact subgroup of $G^{\ad}(\RR)^+$ constituting the stabiliser of $x$. Then $S(\RR)$ is a maximal torus of $K_{\infty}$. 

Suppose $S(\RR)$ fixes another point $gx\in X^+$ for some $g\in G^{\ad}(\RR)^+$ (note that $K_{\infty}$ is also the stabiliser of $x$ in $G^{\ad}(\RR)$ but it is not necessarily a maximal compact subgroup of $G^{\ad}(\RR)$; consider the difference between $O(2)$ and $SO(2)$, for example. Hence, the theorem does not extend to $X$ i.e we cannot take $g\in G^{\ad}(\RR)$). Then $g^{-1}S(\RR)g$ is contained in $K_{\infty}$ and, since all maximal tori in connected compact lie groups are conjugate, there exists $k\in K_{\infty}$ such that 
\begin{align*}
(gk)^{-1}S(\RR)(gk)=S(\RR).
\end{align*} 
Therefore, $gk$ belongs to the normaliser $N$ of $S(\RR)$ in $G^{\ad}(\RR)^+$.

Note that, since $S$ is maximal and $G^{\ad}$ is reductive, the centraliser of $S$ in $G^{\ad}$ is $S$ itself. Therefore, the quotient $N/S(\RR)$ is the (finite) Weyl group of $S(\RR)$ in $G^{\ad}(\RR)^+$. Hence, $N$ is also compact and, since all maximal compact subgroups in real lie groups are conjugate, it must be contained in $hK_{\infty}h^{-1}$ for some $h\in G^{\ad}(\RR)^+$.

Therefore, $h^{-1}S(\RR)h$ is contained in $K_{\infty}$ and, since all maximal tori in connected compact lie groups are conjugate, there exists a $k\in K_{\infty}$ such that
\begin{align*}
(hk)^{-1}S(\RR)(hk)=S(\RR).
\end{align*}
Thus, $hk\in N$ and so $h=nk$ for some $n\in N$. Therefore, $hK_{\infty}h^{-1}$ is equal to $nK_{\infty}n^{-1}$ and $N$ is contained in $K_{\infty}$. We conclude that $g\in K_{\infty}$ and $x$ is the unique point of $X^+$ fixed by $S(\RR)$.

Now let $s\in S(\QQ)$ be a regular element. By this we mean an element $s\in S(\QQ)$ such that the centraliser $Z_{G^{\ad}}(s)$ of $s$ in $G^{\ad}$ is equal to $S$. These are precisely the elements not belonging to $\ker(\alpha)$ for any of the finitely many roots $\alpha$ of $S$. Note that, since $S$ is maximal, the union of these subvarieties and their Galois conjugates is a proper closed subvariety of $S$. Hence, regular elements in $S(\QQ)$ do exist.

Suppose that $s$ fixes a point
\begin{align*}
x':\SSS\rightarrow G_{\RR}
\end{align*} 
in $X^+$ i.e. $\ad(s)(t)=t$ for all $t\in x'(\SSS)(\RR)$. Therefore, $\pi(t)s\pi(t)^{-1}=s$ for all $t\in x'(\SSS)(\RR)$ and so $\pi(x'(\SSS))(\RR)$ is contained in $Z_{G^{\ad}}(s)(\RR)=S(\RR)$. Therefore, $S(\RR)$ fixes $x'$ and so $x'=x$. 

\end{proof}



\subsection{Hodge-generic points}

Recall the definition of a Hodge-generic point:

\begin{defini}
Let $(G,X)$ be a Shimura datum. A point $x\in X$ is called Hodge-generic if 
\begin{align*}
x:\SSS\rightarrow G_\RR
\end{align*} 
does not factor through $H_\RR$ for any proper algebraic subgroup $H$ of $G$ defined over $\QQ$.
\end{defini}

The following property of Hodge-generic points follows immediately from \cite{UY}, Lemma 2.2:

\begin{prop}\label{HG}
Let $(G,X)$ be a Shimura datum and let $X^+$ be a connected component of $X$.  Let $x\in X^+$ be Hodge-generic and let $g\in G^{\ad}(\QQ)^+$ fix $x$. Then $g$ is the identity.
\end{prop}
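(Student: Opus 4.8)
The statement is that a Hodge-generic point $x \in X^+$ has trivial stabiliser in $G^{\ad}(\QQ)^+$. The plan is to invoke \cite{UY}, Lemma 2.2, exactly as was done at the start of the proof of Theorem \ref{dcl}, but now exploiting the Hodge-generic hypothesis in place of the "unique fixed point" hypothesis. Concretely, suppose $g \in G^{\ad}(\QQ)^+$ fixes $x$, and let $\tilde{g} \in G(\QQ)$ be a lift of $g$ under $\pi: G \to G^{\ad}$. Let $M := \MT(x)$ be the Mumford-Tate group of $x$. Since $\tilde{g}$ fixes $x$, Lemma 2.2 of \cite{UY} tells us that $\tilde{g}$ lies in the centraliser of $M(\QQ)$ in $G(\QQ)$.

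Now the key step: because $x$ is Hodge-generic, $x$ does not factor through $H_\RR$ for any proper $\QQ$-algebraic subgroup $H$ of $G$. By definition $M$ is the smallest such $H$ through which $x$ does factor, so Hodge-genericity forces $M = G$. Hence $\tilde{g}$ centralises $G(\QQ)$, i.e. $\tilde{g}$ lies in the centre $Z(G)(\QQ)$ — here one uses that $G(\QQ)$ is Zariski-dense in $G$, so the centraliser of $G(\QQ)$ equals the centraliser of $G$, which is $Z(G)$. Therefore its image $g = \pi(\tilde{g})$ is trivial in $G^{\ad}(\QQ)^+ = (G/Z(G))^{\ad}(\QQ)^+$, which is what we wanted.

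There is essentially no obstacle here; the only points requiring a word of care are (i) checking that $G(\QQ)$ is Zariski-dense in $G$ so that centralising $G(\QQ)$ is the same as centralising $G$ — this holds because $G$ is a connected reductive (indeed, in our cases, rational) group over the infinite field $\QQ$, by a standard density theorem — and (ii) noting that $M = \MT(x)$ being all of $G$ is immediate from comparing Definition \ref{sp}'s ambient smallest-subgroup formulation with the definition of Hodge-generic. Both are genuinely routine, which is why the statement is offered as a Proposition with a one-line proof: it is the Hodge-generic specialisation of the first paragraph of the proof of Theorem \ref{dcl}, where now the Mumford-Tate group is forced to be the whole group rather than merely shown to be a torus. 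Thus I would simply write: "Let $\tilde g \in G(\QQ)$ be a lift of $g$. Since $x$ is Hodge-generic, $\MT(x) = G$, so by \cite{UY}, Lemma 2.2, $\tilde g$ centralises $G(\QQ)$ and hence lies in $Z(G)(\QQ)$; therefore $g$ is the identity in $G^{\ad}(\QQ)^+$."
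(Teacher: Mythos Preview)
Your proposal is correct and matches the paper's approach: the paper simply states that the proposition ``follows immediately from \cite{UY}, Lemma 2.2,'' and your unpacking of this is precisely the Hodge-generic specialisation of the first paragraph of the proof of Theorem~\ref{dcl}, as you yourself observe. The one technicality worth flagging is the existence of a lift $\tilde g\in G(\QQ)$ of $g\in G^{\ad}(\QQ)^+$, which is not automatic for arbitrary $G$; however, the paper makes the same move verbatim in its proof of Theorem~\ref{dcl}, and one can in any case bypass the lift by noting that $\mathrm{int}(g)$ is a $\QQ$-automorphism of $G$ whose fixed subgroup contains the image of $x$ and hence $\MT(x)=G$.
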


In this paper we will restrict our attention to Shimura varieties of dimension $1$ for the reason that, in this situation, every point is either special or Hodge-generic. To see this, note that any proper Mumford-Tate group gives rise to a special subvariety (see, for example, \cite{M}, Theorem 5.16). In particular, our models will satisfy {\it uniqueness of the generic type} defined in \cite{BHHKK}, Definition 2.1, QM4.

\subsection{Galois representations}

Let $S$ be a Shimura variety. 

\begin{lem}\label{nsp}
Let $\bar{g}:=(g_1,\ldots,g_n)$ be a tuple of elements belonging to $G^{\ad}(\QQ)^+$. The image of the map \begin{align*}
f:X^+\rightarrow S(\CC)^n
\end{align*} 
sending $x$ to $(p(g_1x),\ldots,p(g_nx))$ is an algebraic variety defined over $E^{\ab}$, which we denote $Z_{\bar{g}}$.
\end{lem}

\begin{proof}
Consider the compact open subgroup 
\begin{align*}
K':=g_1Kg^{-1}_1\cap\cdots\cap g_nKg^{-1}_n.
\end{align*} 
of $G(\AAA_f)$. The image of $f$ is an irreducible component of the image of the map
\begin{align*}
\Sh_{K'}(G,X)\rightarrow\Sh_K(G,X)^n
\end{align*}
induced by the morphism of Shimura data coming from
\begin{align*}
G\rightarrow G^n:g\mapsto (g_1gg^{-1}_1,\ldots,g_ngg^{-1}_n).
\end{align*}
By \cite{M}, Theorem 13.6, this map is defined over $E$ and the connected components of $\Sh_{K'}(G,X)$ are defined over $E^{\ab}$.

\end{proof}

Consider the subvariety $Z_{\bar{g}}$, where $\bar{g}:=(e,g_1,\ldots,g_n)$ is a tuple of distinct elements belonging to $G^{\ad}(\QQ)^+$ and $e$ denotes the identity element. It is biholomorphic to $\Gamma_{\bar{g}}\backslash X^+$, where 
\begin{align*}
\Gamma_{\bar{g}}:=\Gamma\cap g^{-1}_1\Gamma g_1\cap\cdots\cap g^{-1}_n\Gamma g_n. 
\end{align*}

Consider a subset of $\{g_1,\ldots,g_n\}$, which we denote $\{h_1,\ldots,h_m\}$ for some $m$ at most $n$. We obtain another tuple ${\bar{h}}:=(e,h_1,\ldots,h_m)$ and a finite morphism $Z_{\bar{g}}\rightarrow Z_{\bar{h}}$ via the natural map
\begin{align*}
\Gamma_{\bar{g}}\backslash X^+\rightarrow\Gamma_{\bar{h}}\backslash X^+
\end{align*}
whose fibres carry a transitive action of $\Gamma_{\bar{h}}$. If $\Gamma_{\bar{g}}$ is normal in $\Gamma_{\bar{h}}$ then this action factors through the group $\Gamma_{\bar{h}}/\Gamma_{\bar{g}}$. On $Z_{\bar{g}}$ this action is given by regular maps defined over $E^{\ab}$ so, if $z\in Z_{\bar{h}}$ has coordinates in an extension $L$ of $E^{\ab}$, the action of $\Aut(\CC/L)$ on the fibre above $z$ commutes with the action of $\Gamma_{\bar{h}}/\Gamma_{\bar{g}}$. This action is transitive and  (by virtue of $\Gamma$ being torsion free in $G^{\ad}(\QQ)^+$) without fixed points. In particular, the fibre above $z$ is a torsor for the group $\Gamma_{\bar{h}}/\Gamma_{\bar{g}}$. Therefore, for any point in the fibre we obtain a continuous homomorphism
\begin{align*}
\Aut(\CC/L)\rightarrow\Gamma_{\bar{h}}/\Gamma_{\bar{g}}
\end{align*}
as in \cite{P}, \S6. For any two such points $\tilde{z}$ and $\tilde{z}'$, there exists a unique element $\gamma\in\Gamma_{\bar{h}}/\Gamma_{\bar{g}}$ such that $\tilde{z}'=\gamma\tilde{z}$ and the corresponding homomorphisms are conjugate by this $\gamma$.

We obtain an inverse system of varieties $Z_{\bar{g}}$ with finite morphisms to $S(\CC)$, indexed by tuples ${\bar{g}}=(e,g_1,\ldots,g_n)$ of distinct elements in $G^{\ad}(\QQ)^+$ such that $\Gamma_{\bar{g}}$ is a normal subgroup of $\Gamma$. Note that, for any $g\in G(\QQ)$, the double coset $\Gamma g\Gamma$ is the disjoint union of finitely many single cosets $h\Gamma$, where $h\in G(\QQ)$ (see \cite{M2}, Lemma 5.29). It follows that this system of normal subgroups is coinitial in the system coming from all tuples. 

The limit 
\begin{align*}
\overline{\Gamma}:=\varprojlim_{\bar{g}} \Gamma/\Gamma_{\bar{g}}
\end{align*} 
over all tuples ${\bar{g}}=(e,g_1,\ldots,g_n)$ such that $\Gamma_{\bar{g}}$ is normal in $\Gamma$ again acts freely and transitively on the fibre above a point in $S(\CC)$. Therefore, if $z\in S(\CC)$ has coordinates in an extension $L$ of $E^{\ab}$, we obtain (the conjugacy class of) a continuous homomorphism
\begin{align*}
\Aut(\CC/L)\rightarrow\overline{\Gamma}.
\end{align*}
Similarly, if we take a tuple of points ${\bar{z}}:=(z_1,\ldots,z_m)\in S(\CC)^m$ with coordinates in an extension $L$ of $E^{\ab}$, the action of $\Aut(\CC/L)$ on the fibre above ${\bar{z}}$ is given by (the conjugacy class of) a continuous homomorphism
\begin{align*}
\Aut(\CC/L)\rightarrow\overline{\Gamma}^m.
\end{align*}
We note here that when $S(\CC)$ is a moduli space for abelian varieties and $A$ is the (isomorphism class of an) abelian variety defined over $L$ associated to $\bar{z}$ then the above homomorphism is obtained from the action of $\Aut(\CC/L)$ on the adelic Tate module of $A$. See \cite{UY} for further details. 

\section{Model theory}\label{modeltheory}

In this paper we will consider three types of model-theoretic structures, two of which will be one-sorted and the other two-sorted.  

Let $S$ be a Shimura variety. When we refer to the \textit{covering structure}, we refer to a one-sorted structure of the form
\begin{align*}
\textbf{D}: = \langle D , \{ f_g \}_{g \in G^{\ad}(\QQ)^+}\rangle,
\end{align*}
where $D$ is a set and each $f_g:D \rightarrow D$ is a unary function. These functions will correspond to a left action of $G^{\ad}(\QQ)^+$ on $D$ and we will write $g$ instead of $f_g$ and $G^{\ad}(\QQ)^+$ instead of $\{ f_g \}_{g \in G^{\ad}(\QQ)^+}$. In particular, we will write a covering structure as
\begin{align*}
\textbf{D} = \langle D, G^{\ad}(\QQ)^+ \rangle.
\end{align*}
We denote by $\mathcal{L}_{G}$ the natural language for these one-sorted structures.

When we refer to the \textit{variety structure}, we refer to a one-sorted structure of the form
\begin{align*}
\textbf{S}:=\langle S(F), \cR \rangle
\end{align*}
where $S(F)$ are the points of $S$ in an algebraically closed field $F$ and $\cR:=\cR(F,E^{\ab}(\Sigma))$ is a set containing a relation for every Zariski closed subset of $S(F)^n$ defined over $E^{\ab}(\Sigma)$ for all $n \in \NN$. However, we will interpret this structure in the field sort
\begin{align*}
\textbf{F}:=\langle F,+,\cdot,E^{\ab}(\Sigma)\rangle,
\end{align*}
namely the algebraically closed field $F$ with constants added for each element of $E^{\ab}(\Sigma)$. Therefore, we will use the language of rings $\mathcal{L}_r$ for the variety structure.

Finally, when we refer to the {\it two-sorted structure}, we refer to a two-sorted structure of the form
\begin{align*}
{\bf q}:=\langle{\bf D},{\bf S},q\rangle,
\end{align*}
where ${\bf D}$ is a covering structure, ${\bf S}$ is a variety structure and $q$ is a function from ${\bf D}$ to ${\bf S}$. We denote by $\mathcal{L}$ the natural language for these two-sorted structures, namely the combination of $\mathcal{L}_{G}$, $\mathcal{L}_r$ and a function symbol between the two sorts.\\

Clearly, $S$ determines a two-sorted structure
\begin{align*}
{\bf p}:=\langle\langle X^+,G^{\ad}(\QQ)^+\rangle,\langle S(\CC),\cR\rangle,p:X^+\rightarrow S(\CC)\rangle.
\end{align*}
By Theorem \ref{dcl}, each special point in $X^+$ belongs to the definable closure of the empty set. This is why we include $E^{\ab}(\Sigma)$ as our field of constants rather than $E^{\ab}$. Note that there are only countably many special points since the coordinates of their images in $S(\CC)$ are algebraic. 

Let $\Th \langle X^+, G^{\ad}(\QQ)^+ \rangle$ be the complete first order $\mathcal{L}_{G}$-theory of the covering structure $\langle X^+, G^{\ad}(\QQ)^+ \rangle$. Note that, for any group $\cG$, the class of faithful $\cG$-sets is first order axiomatisable. In particular, any model of $\Th \langle X^+, G^{\ad}(\QQ)^+ \rangle$ is indeed a $G^{\ad}(\QQ)^+$-set and, furthermore, has well-defined notions of {\it special point} and {\it Hodge-generic point}: a point that is the unique fixed point of an element of $G^{\ad}(\QQ)^+$ and a point fixed by no element of $G^{\ad}(\QQ)^+$, respectively.

\begin{conv}
Let $\mathcal{L}_0$ be a language and let ${\bf M}$ be an $\mathcal{L}_0$-structure with universe $M$. If $A\subseteq M$ and $\bar{a}:=(a_1,...,a_n)\in M^n$ then we denote the quantifier-free type of $\bar{a}$ over $A$ by
\begin{align*}
{\rm qftp}_{\mathcal{L}_0}(\bar{a}/A).
\end{align*}
\end{conv}

\subsection{Axiomatisation}\label{axioms}

Let $S$ be a Shimura variety, let ${\bf p}$ be the corresponding two-sorted structure and let $\Th({\bf p})$ denote the complete first order $\mathcal{L}$-theory of ${\bf p}$. For each of the subvarieties $Z_{\bar{g}}$ and corresponding tuple ${\bar{g}}:=(g_1,\ldots,g_n)$ of elements in $G^{\ad}(\QQ)^+$, consider the following $\cL$-sentences appearing in $\Th(\bf p)$:
\begin{enumerate}
\item[(1)] $\textsf{MOD}^1_{\bar{g}}:= \forall x\in D\ (q(g_1x),\ldots,q(g_nx))\in Z_{\bar{g}}$
\item[(2)] $\textsf{MOD}^2_{\bar{g}}:= \forall z\in Z_{\bar{g}}\ \exists x\in D\ (q(g_1x),\ldots,q(g_nx))=z$
\end{enumerate}
Define \textit{the modularity} condition to be the axiom scheme 
\begin{align*}
\textsf{MOD} := \bigcup_{\bar{g}}\textsf{MOD}^1_{\bar{g}}\wedge\textsf{MOD}^2_{\bar{g}}.
\end{align*}
For each special point $x\in X^+$, choose $g_x\in G^{\ad}(\QQ)^+$ that fixes $x$ and only $x$ in $X^+$ (whose existence is ensured by Theorem \ref{dcl}). Consider the following $\cL$-sentences appearing in $\Th(\bf p)$:
\begin{align*}
\textsf{SP}_x := \forall y\in D\ \left(g_xy=y \implies q(y)=p(x)\right).
\end{align*}
The {\it special points} condition is defined as the axiom scheme
\begin{align*}
\textsf{SP}:= \bigcup_x \textsf{SP}_x. 
\end{align*}
Now let ${\rm T}({\bf p})$ denote the following union of $\mathcal{L}$-sentences appearing $\Th(\bf p)$:
\begin{align*}
{\rm T}({\bf p}):= \Th \langle X^+,G^{\ad}(\QQ)^+\rangle \cup \Th \langle S(\CC),\cR \rangle \cup \textsf{MOD} \cup \textsf{SP},
\end{align*}
where $\Th \langle S(\CC),\cR \rangle$ denotes the complete first-order $\cL_r$-theory of $\langle S(\CC),\cR \rangle$.

\subsection{Quantifier elimination and completeness}

Consider a Shimura variety $S$ and let $\bf p$ be the corresponding two-sorted structure. Suppose that we have two two-sorted structures
\begin{align*}
{\bf q}:=\langle{\bf D},{\bf S},q\rangle,\ {\bf q'}:=\langle{\bf D'},{\bf S'},q'\rangle,
\end{align*}
which are models of ${\rm T}({\bf p})$, and suppose that
\begin{align*}
\rho:D\cup S(F)\rightarrow D'\cup S(F')
\end{align*}
is a partial isomorphism with finitely generated domain $U$. By definition, $U$ is the union of $U_{\bf D}:=U\cap D$ and $U_{\bf S}:=U\cap S(F)$, where $U_{\bf D}$ is the union of the $G^{\ad}(\QQ)^+$ orbits of finitely many $x\in D$ and $U_{\bf S}$ is $S(L)$ for some field $L$ generated by the coordinates of the images of the $x\in U_{\bf D}$ along with finitely many other points in $S(F)$. Therefore, $\rho$ restricts to a $G^{\ad}(\QQ)^+$-equivariant injection $U_{\bf D}\rightarrow D'$ and an embedding $S(L)\rightarrow S(F')$ induced by an embedding of $L$ into $F'$ fixing $E^{\ab}(\Sigma)$.

\begin{prop}\label{tp2}
Suppose that $x\in D$ is a special point such that $x\notin U_{\bf D}$. There exists a (unique) realisation of $\rho({\rm qftp}_{\mathcal{L}}(x/U))$ in $D'$.
\end{prop}

\begin{proof}
Since $x$ is special, there exists only one possible choice, which we denote $x'$. Now consider an atomic formula $f$ in ${\rm qftp}_{\mathcal{L}}(x/U)$. If $f$ can be written in $\mathcal{L}_G$ then it can only be $x=gx$, $x\neq gx$ or $x\neq y$, where $g\in G^{\ad}(\QQ)^+$ and $y\in U_{\bf D}$. In which case, $x'$ must clearly satisfy $\rho(f)$. Otherwise, $f$ is an formula in $q(g_1x),\ldots,q(g_nx)$, where $g_1,\ldots,g_n\in G^{\ad}(\QQ)^+$, with parameters in $L$. Since $\bf q$ and ${\bf q'}$ satisfy $\mathsf{SP}$, we know that $q'(g_i(x'))=q(g_i(x))$ and has coordinates in $E^{\ab}(\Sigma)$ for all $i\in\{1,\ldots n\}$. Therefore, $q'(g_1x'),\ldots,q'(g_nx')$ satisfy $\rho(f)$ and the result follows.
\end{proof}

\begin{prop}\label{tp}
Suppose that $x\in D$ is a Hodge-generic point such that $x\notin U_{\bf D}$. Then ${\rm qftp}_{\mathcal{L}}(x/U)$ is determined by 
\begin{align*}
\bigcup_{\bar{g}}{\rm qftp}_{\mathcal{L}}((q(g_1x),\ldots,q(g_nx))/L)
\end{align*}
ranging over all tuples $\bar{g}:=(g_1,\ldots,g_n)$ of elements of $G^{\ad}(\QQ)^+$. 
\end{prop}

\begin{proof}
Consider the non-trivial atomic formulae in ${\rm qftp}_{\mathcal{L}}(x/U)$ that can be written in $\mathcal{L}_G$. These are equivalent to $x\neq y$ for all $y\in U_{\bf D}$, and $x\neq gx$ for all non-trivial $g\in G^{\ad}(\QQ)^+$. Since $\Gamma$ acts without fixed points, these formulae follow from the following formulae appearing in the above union:
\begin{enumerate}
\item[(1)] $(q(x),q(y))\notin Z_{(e,e)}$, $\forall y\in U_{\bf D}$
\item[(2)] $(q(x),q(x))\notin Z_{(e,g)}$, $\forall g\notin\Gamma$
\end{enumerate}
where by $q(y)$ in (1) we refer to the coordinates in $S(L)$.
\end{proof}

\begin{prop}\label{real}
Suppose that $x\in D$ and let $\bar{g}:=(g_1,\ldots,g_n)$ denote a tuple of elements belonging to $G^{\ad}(\QQ)^+$. There exists an $x'\in D'$ such that 
the tuple
\begin{align*}
(q'(g_1x'),\ldots,q'(g_nx'))\in S(F')^n
\end{align*}
is a realisation of
\begin{align*}
\rho({\rm qftp}_{\mathcal{L}}((q(g_1x),\ldots,q(g_nx))/L))
\end{align*}
\end{prop}

\begin{proof}
Note that
\begin{align*}
{\rm qftp}_{\mathcal{L}}((q(g_1x),\ldots,q(g_nx))/L)
\end{align*} 
is determined by the smallest algebraic subvariety of $S(F)^n$ defined over $L$ containing the tuple in question. Since the axiom $\textsf{MOD}^1_{\bar{g}}$ is true in ${\bf q}$, this subvariety must be contained in $Z_{\bar{g}}$. Therefore, since $Z_{\bar{g}}^{\sigma}=Z_{\bar{g}}$, the algebraic subvariety that determines
\begin{align*}
\rho({\rm qftp}_{\mathcal{L}}((q(g_1x),\ldots,q(g_nx))/L)).
\end{align*}
is also contained in $Z_{\bar{g}}$. The result follows from the fact that $\textsf{MOD}^2_{\bar{g}}$ is true in ${\bf q'}$. 
\end{proof}

\begin{prop}{(cf. \cite{Z}, Lemma 3.4)}\label{qec}
Suppose that $S$ has dimension $1$ and that ${\bf q}$ and ${\bf q'}$ are $\omega$-saturated. Given any $\alpha\in D\cup S(F)$, $\rho$ extends to the substructure generated by $U\cup\{\alpha\}$.
\end{prop}

\begin{proof}
First consider the case that $\alpha\in S(F)$. We may assume that $\alpha\notin q(U_{\bf D})$ since otherwise we have $\alpha\in U_{\bf S}$. Since ${\rm qftp}_{\mathcal{L}}(\alpha/U)$ is determined by ${\rm qftp}_{\mathcal{L}_r}(\alpha/L)$, we can extend $\rho$ by choosing a realisation of $\rho({\rm qftp}_{\mathcal{L}_r}(\alpha/L))$.

Now consider the case that $\alpha\in D$ such that $\alpha\notin U_{\bf D}$. If $x$ is special then $\rho(x)$ is prescribed by Proposition \ref{tp2}. Therefore, since $S$ has dimension $1$, we may assume that $x$ is Hodge-generic. By Proposition \ref{tp}, $\rho({\rm qftp}_{\mathcal{L}}(x/U))$ is determined by 
\begin{align*}
\bigcup_{\bar{g}}\rho({\rm qftp}_{\mathcal{L}}((q(g_1x),\ldots,q(g_nx))/L))
\end{align*}
ranging over all tuples $\bar{g}:=(g_1,\ldots,g_n)$ of elements of $G^{\ad}(\QQ)^+$. By Proposition \ref{real}, every finite subset of this type is realisable in a model of ${\rm T}({\bf p})$. Therefore, by compactness, this type is consistent and, since ${\bf q'}$ is $\omega$-saturated, it has a realisation $x'\in D'$. 
\end{proof}

\begin{cor}\label{com}
The theory ${\rm T}({\bf p})$ has quantifier elimination, is complete, model complete and superstable.
\end{cor}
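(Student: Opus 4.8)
The plan is to read all four assertions off the preceding proposition, which supplies exactly the extension step required for a back-and-forth argument.

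\emph{Quantifier elimination.} Let ${\bf q},{\bf q}'$ be $\omega$-saturated models of ${\rm T}({\bf p})$ and let $\bar a$ in ${\bf q}$, $\bar b$ in ${\bf q}'$ be tuples with the same quantifier-free $\mathcal{L}_p$-type. Then $\bar a\mapsto\bar b$ extends to an isomorphism of the finitely generated substructures they generate, and, starting from this isomorphism and applying the preceding proposition alternately in ${\bf q}$ and in ${\bf q}'$, one builds a back-and-forth system; hence $\bar a$ and $\bar b$ have the same complete type. This is the standard criterion (cf. \cite{Z}, Lemma 3.4) for ${\rm T}({\bf p})$ to eliminate quantifiers.

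\emph{Model completeness and completeness.} Model completeness is immediate, since by quantifier elimination an embedding between models of ${\rm T}({\bf p})$ preserves quantifier-free formulas in both directions, hence all formulas. For completeness, note that $\mathcal{L}_p$ has no constant symbols, hence no closed terms, so every quantifier-free $\mathcal{L}_p$-sentence is logically equivalent to $\top$ or $\bot$; by quantifier elimination every $\mathcal{L}_p$-sentence is ${\rm T}({\bf p})$-equivalent to one of these and is therefore decided by ${\rm T}({\bf p})$. Alternatively, by Theorem \ref{dcl} and the scheme $\textsf{SP}$ the substructure generated by $\emptyset$ — the special points together with their $G^{\ad}(\QQ)^+$-orbits, which are again special, and the $E^{\ab}(\Sigma)$-rational points of $S$ with their distinguished relations — is canonically identified across all models of ${\rm T}({\bf p})$, so the back-and-forth above shows any two models are elementarily equivalent.

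\emph{Superstability.} By quantifier elimination it suffices to count quantifier-free types. Fix a model, a subset $A$ with $|A|=\lambda\ge 2^{\aleph_0}$, and a finite tuple $\bar x$ from the covering sort (a tuple also involving variety-sort coordinates additionally contributes an $\mathcal{L}_S$-type, and there are at most $\lambda$ of these since the variety structure is $\omega$-stable, being interpretable in an algebraically closed field with names for the countable set $E^{\ab}(\Sigma)$). After discarding coordinates that are special or lie in the $G^{\ad}(\QQ)^+$-orbit of another coordinate — which by Theorem \ref{dcl} and Proposition \ref{HG} are determined — we may assume $\bar x$ consists of Hodge-generic points in distinct orbits, so Proposition \ref{tp} applies. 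The quantifier-free type of $\bar x$ over $A$ is then determined by: which $G^{\ad}(\QQ)^+$-translates of the $x_i$ lie in $A$ (at most $\lambda$ possibilities, a Hodge-generic translate being pinned down by the corresponding element of $A$); and the quantifier-free $\mathcal{L}_S$-type over the images of $A$ of the family of $q$-images of all $G^{\ad}(\QQ)^+$-translates of the $x_i$. Because each Hecke correspondence is a curve finite over each factor, all these $q$-images lie in the algebraic closure of $E^{\ab}(\Sigma)$ adjoined the finitely many $q(x_i)$; so this $\mathcal{L}_S$-type is determined by the $\mathcal{L}_S$-type of the $q(x_i)$ (at most $\lambda$ possibilities) together with a choice of branch in each of the countably many Hecke correspondences (at most $\aleph_0^{\aleph_0}=2^{\aleph_0}$ possibilities). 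Altogether there are at most $\lambda+2^{\aleph_0}=\lambda$ types over $A$, so ${\rm T}({\bf p})$ is $\lambda$-stable for all $\lambda\ge 2^{\aleph_0}$, i.e. superstable. (It is not $\omega$-stable: the branch data already accounts for $2^{\aleph_0}$ types over $\emptyset$.)

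The substance is entirely contained in the preceding proposition. What still needs care is that the back-and-forth can be launched — this is where Theorem \ref{dcl} enters, guaranteeing the special points are rigid — and, for superstability, bounding the Hecke ``branching'' contribution by $2^{\aleph_0}$ and combining it with the $\omega$-stability of the field reduct so as to land exactly in the superstable rather than the $\omega$-stable class.
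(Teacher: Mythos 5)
Your proof is correct and follows the same route as the paper, which defers quantifier elimination, completeness and model completeness to the back-and-forth criterion built on the preceding proposition (citing \cite{Pil}, Propositions 2.29, 2.30 and Remark 2.38) and asserts that superstability ``follows easily from quantifier elimination and Proposition 3.1''; your explicit type-count over a parameter set of size $\lambda \ge 2^{\aleph_0}$ (at most $\lambda$ from the $\omega$-stable field reduct, at most $2^{\aleph_0}$ from Hecke branching) is exactly that ``easy'' argument. One small terminological nitpick: since $\mathcal{L}_p$ has no constant symbols, the ``substructure generated by $\emptyset$'' in your alternative completeness argument would be empty; what you mean is the definable closure of $\emptyset$, which is where Theorem \ref{dcl} and the singleton relations place the special and $E^{\ab}(\Sigma)$-rational points, but your primary argument via the absence of closed terms is the cleaner one anyway.
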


\begin{proof}
See \cite{Pil}, Proposition 2.29, Proposition 2.30 and Remark 2.38. Superstability follows easily from quantifier elimination.
\end{proof}

\section{Categoricity}\label{cat}

Let $S$ be a Shimura variety, let $\bf p$ denote the corresponding two-sorted structure. We define the {\it standard fibres} condition to be the ${\cal{L}}_{\omega_1,\omega}$-axiom
\begin{align*}
\mathsf{SF}:=\forall x\forall y\in D\left(q(x)=q(y)\rightarrow\bigvee_{\gamma\in\Gamma}x=\gamma y\right).
\end{align*} 
We denote by $\Th^{\infty}_{\textsf{SF}}({\bf p})$ the union of $\Th({\bf p})$, the standard fibres condition and the additional axiom that the transcendence degree of $F$ is infinite.

Now assume that $S$ is a modular or Shimura curve. Our objective is to prove Theorem \ref{main}. This follows from the model-theoretic statement that the theory $\Th^{\infty}_{\textsf{SF}}({\bf p})$ is $\kappa$-{\it categorical} for all infinite cardinalities $\kappa$. 

\subsection{Necessary conditions}

Let $S$ be a Shimura variety and let $\bf p$ be the corresponding two-sorted structure. Let $x_1,\ldots,x_m\in X^+$ be a collection of Hodge-generic points in distinct $G^{\ad}(\QQ)^+$ orbits and let $L$ be the field obtained from $E^{\ab}(\Sigma)$ by adjoining the coordinates of $\bar{z}:=(p(x_1),\ldots,p(x_m))$. We will prove the following theorem:

\begin{teo}\label{OI}
If $\Th^{\infty}_{\textsf{SF}}({\bf p})$ is $\aleph_1$-categorical, then the image of the homomorphism
\begin{align*}
\Aut(\CC/L)\rightarrow\overline{\Gamma}^m
\end{align*}
associated with $\bar{z}$ has finite index.
\end{teo}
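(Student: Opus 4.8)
The plan is to argue contrapositively: assuming the image $H$ of $\Aut(\CC/L) \to \overline{\Gamma}^m$ has infinite index, I would construct two models of $\Th^{\infty}_{\textsf{SF}}({\bf p})$ of cardinality $\aleph_1$ that are not isomorphic, thereby contradicting $\aleph_1$-categoricity. The intuition is that $\overline{\Gamma}^m$ is precisely the group that acts transitively on the fibre of $X^+ \to S(\CC)^m$ above $\bar z$, so by the analysis in \S2.5 the image $H$ records exactly which Hecke-theoretic constraints among a lift of $\bar z$ are ``seen'' by the field $L$. In the standard model ${\bf p}$, the orbits of $x_1,\dots,x_m$ under $G^{\ad}(\QQ)^+$ impose all the constraints in $\overline{\Gamma}^m$, and these are accounted for by $\Aut(\CC/L)$ only up to the subgroup $H$. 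If $[\overline{\Gamma}^m : H] = \infty$, there is ``room'' to build a second model in which the analogous tuple sits inside a different coset structure, making any putative $G^{\ad}(\QQ)^+$-equivariant bijection incompatible with a field isomorphism over $E^{\ab}(\Sigma)$.

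Concretely, the key steps are as follows. First, I would fix the standard model ${\bf p}$ and a Hodge-generic tuple $(x_1,\dots,x_m)$ as in the hypothesis, noting (via Proposition \ref{tp} and Proposition \ref{HG}) that the $\mathcal{L}_p$-type of this tuple over $\emptyset$, and more generally over a field of definition, is controlled entirely by the algebraic loci $Z_{\bar g}$ of Lemma \ref{nsp} together with the Galois action on fibres. Second, I would construct a model ${\bf q}$ of $\Th^{\infty}_{\textsf{SF}}({\bf p})$ of size $\aleph_1$ containing a Hodge-generic tuple $(x_1',\dots,x_m')$ whose image $\bar z' := (q(x_1'),\dots,q(x_m'))$ has the property that the homomorphism $\Aut(\CC/L') \to \overline{\Gamma}^m$ associated with $\bar z'$ has image a proper finite-index overgroup of $H$ (or more simply, has full image), where $L'$ is the field generated by $\bar z'$ over $E^{\ab}(\Sigma)$; this is where one uses that models of the first-order theory plus $\textsf{MOD}$ plus $\textsf{SF}$ can be assembled with prescribed Galois behaviour on the relevant fibres, and the axioms $\textsf{MOD}^1, \textsf{MOD}^2$ guarantee the constructed fibre structure is consistent. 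Third, I would invoke Keisler's theorem (\cite{K1}): an $\aleph_1$-categorical $\mathcal{L}_{\omega_1,\omega}$-sentence has only countably many types realised in its $\aleph_1$-sized model, equivalently the model is ``small''; I would show that if $[\overline{\Gamma}^m : H] = \infty$ then the tuples $(x_1,\dots,x_m)$ and $(x_1',\dots,x_m')$ realise distinct types over the definable closure of $\emptyset$, and in fact the infinitely many cosets of $H$ yield uncountably many (or at least, enough to defeat smallness) distinct $\mathcal{L}_{\omega_1,\omega}$-types realised across the class of models, contradicting categoricity. Finally I would conclude that $H$ must have finite index.

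The main obstacle I anticipate is the second step: rigorously producing a model of $\Th^{\infty}_{\textsf{SF}}({\bf p})$ --- not merely of the first-order part --- in which the Galois image on the chosen fibre is prescribed to be strictly larger than $H$, while still respecting the $\mathsf{SF}$ condition (transitive $\Gamma$-action on all fibres of $q$) and the completeness of the first-order theory from Corollary \ref{com}. One has to be careful that enlarging the Galois image does not force a new first-order constraint that would already be visible in ${\bf p}$; since ${\bf p}$ itself realises every finite-index-subgroup configuration compatible with $\overline{\Gamma}^m$ acting transitively, the correct statement is that the \emph{entire} $\mathcal{L}_{\omega_1,\omega}$-type --- which encodes the inverse limit data --- differs, and translating the index-$\infty$ hypothesis into a genuine difference of $\mathcal{L}_{\omega_1,\omega}$-types (rather than just first-order types, which are controlled by the $Z_{\bar g}$) requires pinning down exactly how $\textsf{SF}$ together with the $Z_{\bar g}$-relations encode the coset of $H$. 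The remaining steps --- the type-counting via Keisler and the final contrapositive bookkeeping --- are then comparatively routine.
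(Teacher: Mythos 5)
Your overall strategy is the one the paper uses: invoke Keisler's theorem that an $\aleph_1$-categorical $\mathcal{L}_{\omega_1,\omega}$-sentence realises only countably many complete $m$-types, and then show an infinite-index Galois image forces too many types. However, there are two gaps, one of which you flagged and one of which you did not.

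The first, which you flagged as your main obstacle, is the explicit construction of models of $\Th^{\infty}_{\textsf{SF}}({\bf p})$ realising prescribed types. The paper does this via the inverse limit $\underline{S}=\varprojlim_{\bar{g}}\Gamma_{\bar{g}}\backslash X^+$: to each point $\tilde{x}\in\underline{S}$ whose components are images of Hodge-generic points, they attach the covering sort $D$ consisting of the $G^{\ad}(\QQ)^+$-orbit of $\tilde{x}$ together with the rest of $X^+\hookrightarrow\underline{S}$ (suitably disjointified), with $q$ the restriction of $\underline{p}$. They then check $\Th({\bf p})$ and $\textsf{SF}$ via a pair of lemmas (using Lemma \ref{TI} that $\cap_g\Gamma_g$ is central), relying on quantifier elimination and completeness from Corollary \ref{com}. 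Each point of the fibre of $\underline{S}\to S(\CC)^m$ over $\bar{z}$ thus gives a realised type, and distinct $\Aut(\CC/L)$-orbits give distinct types. Also note that the correct picture is not, as you suggest, to make the Galois image ``strictly larger'' in a new model; one fixes $\bar z$ and varies the lift of $\bar z$ to the profinite fibre, and different $H$-cosets give different types.

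The second gap you did not flag, and it is the more delicate one. Keisler's theorem only rules out \emph{uncountably} many types, whereas ``infinite index'' does not immediately give uncountably many cosets. The paper fills this with a profinite argument: the index of the image of $\Aut(\CC/L)$ in each finite quotient $\Gamma^m/\Gamma^m_{\bar{g}}$ forms, along the directed system of $\bar{g}$, a sequence of positive integers each dividing the next; so either the sequence stabilises (finite index in $\overline{\Gamma}^m$) or the limit index is $2^{\aleph_0}$. Your phrase ``infinitely many cosets of $H$ yield uncountably many\ldots types'' quietly assumes this dichotomy without establishing it, and without it the contrapositive does not close.
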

As previously mentioned, although the index is well-defined, the homomorphism itself is only defined up to conjugation. The key ingredient in the proof will be the following result of Keisler:

\begin{teo}{\bf(Keisler)}
If an $\mathcal{L}_{\omega_1,\omega}$-sentence is $\aleph_1$-categorical then the set of complete $m$-types realisable over the empty set in models of this sentence is at most countable.
\end{teo}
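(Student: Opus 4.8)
\begin{prf}{Proof sketch}
This is Keisler's theorem; see \cite{K1}. We sketch the contrapositive. Write $\phi$ for the $\mathcal{L}_{\omega_1,\omega}$-sentence and fix, once and for all, a countable fragment $\mathcal{L}_A\subseteq\mathcal{L}_{\omega_1,\omega}$ with $\phi\in\mathcal{L}_A$; take ``complete $m$-type'' to mean complete $\mathcal{L}_A$-$m$-type over $\emptyset$, noting that any type of the full logic refines one of these, so it suffices to bound the latter. Throughout we use two standard features of $\mathcal{L}_A$ in place of compactness, which fails for $\mathcal{L}_{\omega_1,\omega}$: the downward L\"owenheim--Skolem theorem, giving countable $\mathcal{L}_A$-elementary substructures, and the model existence theorem for $\mathcal{L}_{\omega_1,\omega}$, whereby every \emph{consistency property} (a set of consistent $\mathcal{L}_A$-sentences-with-witnesses closed under the evident rules) has a model. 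Assume, for contradiction, that uncountably many $m$-types are realised in models of $\phi$. Replacing a model by a countable $\mathcal{L}_A$-elementary substructure containing a realisation — still a model of $\phi$, since $\phi\in\mathcal{L}_A$ — we may assume each is realised in a countable model of $\phi$; as a countable structure realises only countably many $m$-types, $\phi$ then has uncountably many pairwise non-isomorphic countable models.

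Order the $\mathcal{L}_A$-$m$-formulas consistent with $\phi$ by provable implication modulo $\phi$. The closure, inside Cantor space, of the set of complete $m$-types realised in models of $\phi$ is an uncountable Polish space, so by the perfect set theorem it contains a perfect subset; extracting a Cantor scheme from it yields a full binary splitting tree of formulas $\{\psi_s : s\in 2^{<\omega}\}$, each consistent with $\phi$, with $\vdash_\phi\psi_{s\frown i}\to\psi_s$ and with $\psi_{s\frown 0}\wedge\psi_{s\frown 1}$ inconsistent with $\phi$. Each branch $b\in 2^\omega$ then determines a complete $m$-type $p_b\supseteq\{\psi_{b\restriction n}:n<\omega\}$, realised in some countable model of $\phi$; distinct branches give mutually contradictory types, and each $p_b$ is \emph{non-isolated}, being a non-isolated point of the type space. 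Fix one such type $p_{b^{*}}$. The plan is now to exhibit two non-isomorphic models of $\phi$ of cardinality $\aleph_1$ (if $\phi$ has no model of cardinality $\aleph_1$ it is already not $\aleph_1$-categorical). By the infinitary omitting types theorem — applicable exactly because $p_{b^{*}}$ is non-isolated — there is a countable model of $\phi$ omitting $p_{b^{*}}$; iterating through an $\aleph_1$-chain, each step arranged via a consistency property so as to add new elements while keeping $p_{b^{*}}$ omitted, produces $M_0\models\phi$ of cardinality $\aleph_1$ omitting $p_{b^{*}}$. Similarly, starting from a countable model realising $p_{b^{*}}$ and extending it through an $\aleph_1$-chain gives $M_1\models\phi$ of cardinality $\aleph_1$ realising $p_{b^{*}}$. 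Then $M_0\not\cong M_1$, contradicting $\aleph_1$-categoricity.

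The delicate point, and the main obstacle, is precisely these enlargements ``up to cardinality $\aleph_1$'': with no compactness one cannot freely enlarge a countable model, so each enlargement step is an amalgamation of countable models, carried out by verifying a consistency property. This forces a case split on whether $\phi$ has amalgamation over countable models: when it does, the constructions above go through; when it does not, a tree-of-models construction of height $\omega_1$ directly yields $2^{\aleph_1}$ pairwise non-isomorphic models of cardinality $\aleph_1$, so $\phi$ is again not $\aleph_1$-categorical. In short, the whole argument is the systematic replacement of Henkin constructions by consistency properties, together with the bookkeeping needed to keep $p_{b^{*}}$ omitted (respectively realised) along an $\aleph_1$-long recursion and to handle the amalgamation dichotomy; all of this is carried out in \cite{K1}.
\end{prf}
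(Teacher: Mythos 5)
The paper offers no proof of this statement: it is attributed to Keisler and supported only by the citation \cite{K1}. So there is no ``paper's own proof'' to compare against, and your sketch is best assessed on its own terms as an account of Keisler's argument.

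The architecture you describe — pass to a countable fragment $\mathcal{L}_A$, use downward L\"owenheim--Skolem to reduce to countable models, extract a binary splitting tree of $\mathcal{L}_A$-formulas from an uncountable family of realised types, invoke the omitting types theorem for $\mathcal{L}_{\omega_1,\omega}$, and then build two non-isomorphic models of power $\aleph_1$ via long chains of countable models — is the right shape and identifies the genuine obstacle (no compactness, hence no cheap upward enlargement). Three presentational slips are worth fixing. (i) The parenthetical justification for passing to fragment types is stated backwards: restricting a full $\mathcal{L}_{\omega_1,\omega}$-type to $\mathcal{L}_A$ gives an $\mathcal{L}_A$-type, but bounding the number of $\mathcal{L}_A$-types does not, by itself, bound the number of full types. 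This is harmless here because Keisler's theorem, and the use the paper makes of it, is already about types in a fixed countable language; you should simply say that rather than claim a reduction. (ii) It is an overclaim that every branch type $p_b$ is automatically non-isolated — an isolated type can sit on a branch of a splitting tree. What you actually need, and what is true, is a cardinality argument: there are $2^{\aleph_0}$ branch types but only countably many $\mathcal{L}_A$-formulas, hence at most countably many isolated types, so some $p_{b^*}$ is non-isolated; fix that one. (iii) The amalgamation dichotomy you invoke at the end (``no amalgamation over countable models $\Rightarrow 2^{\aleph_1}$ models in $\aleph_1$'') is a Shelah-style result from the abstract elementary class literature and is not what is ``carried out in \cite{K1}''; it is a legitimate modern gloss on why the $\aleph_1$-chain constructions succeed, but you should not attribute it to Keisler's paper. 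None of these affects the plausibility of the overall outline, but (ii) in particular is a genuine gap in the sketch as written and should be patched by the cardinality argument above.
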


Therefore, we will show that if the above index is not finite then we are able to realise uncountably many distinct complete $m$-types in models of $\Th^{\infty}_{\textsf{SF}}({\bf p})$. To that end, consider the groups 
\begin{align*}
\Gamma_{\bar{g}}:=g^{-1}_1\Gamma g_1\cap\cdots\cap g^{-1}_n\Gamma g_n,
\end{align*}
for all tuples ${\bar{g}}:=(g_1,\ldots,g_n)$ of distinct elements in $G^{\ad}(\QQ)^+$. The quotients of $X^+$ by the groups $\Gamma_{\bar{g}}$ form an inverse system of locally symmetric varieties. The morphisms are those induced by inclusions of groups. Note that this system carries an action of $G^{\ad}(\QQ)^+$ since the action of $\alpha\in G^{\ad}(\QQ)^+$ on $X^+$ induces a map
\begin{align*}
\Gamma_{\bar{g}}\backslash X^+\rightarrow\alpha\Gamma_{\bar{g}}\alpha^{-1}\backslash X^+
\end{align*}
and this comes from a map of algebraic varieties defined over $E^{\ab}$. 

We denote the inverse limit of this system by $\underline{S}$ and denote an equivalence class in $\Gamma_{\bar{g}}\backslash X^+$ by $[\cdot]_{\Gamma_{\bar{g}}}$ so that a point in $\underline{S}$ may be identified with collection of points $[x_{\bar{g}}]_{\Gamma_{\bar{g}}}\in\Gamma_{\bar{g}}\backslash X^+$. The above action of $G^{\ad}(\QQ)^+$ on components is given by \begin{align*}
[x_{\bar{g}}]_{\Gamma_{\bar{g}}}\mapsto[\alpha x_{\bar{g}}]_{\alpha\Gamma_{\bar{g}}\alpha^{-1}}.
\end{align*} 

Now consider a point $\tilde{x}\in\underline{S}$ whose components $[x_{\bar{g}}]_{\Gamma_{\bar{g}}}$ are the images of Hodge generic points $x_{\bar{g}}\in X^+$. We would like to show that there exists a model
\begin{align*}
{\bf q}:=\langle{\bf D},{\bf S},q\rangle
\end{align*}
of $\Th^{\infty}_{\textsf{SF}}({\bf p})$ and an $x\in D$ such that, for all tuples $\bar{g}:=(e,g_1,\ldots,g_n)$,
\begin{align*}
(q(x),q(g_1x),\ldots,q(g_nx))\in Z_{\bar{g}}
\end{align*} 
is equal to the image of $[x_g]_{\Gamma_g}$ in $Z_{\bar{g}}$ under the isomorphism
\begin{align*}
\Gamma_{\bar{g}}\backslash X^+\rightarrow Z_{\bar{g}}
\end{align*}
sending $[x_{\bar{g}}]_{\Gamma_{\bar{g}}}$ to $(p(x_g),p(g_1x_g),\ldots,p(g_nx_g))$.
\begin{lem}\label{TI}
The group $\Gamma_{\infty}:=\cap_{\bar{g}} \Gamma_{\bar{g}}$ belongs to $Z_G(\QQ)$. 
\end{lem}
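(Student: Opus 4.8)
The plan is to show that any element $\gamma$ lying in $\Gamma_g$ for \emph{every} tuple $\bar{g}$ of distinct elements of $G^{\ad}(\QQ)^+$ must act trivially on $X^+$ after conjugation, hence lies in the centre $Z_G$. First I would unwind the definition: $\gamma \in \Gamma_\infty := \bigcap_g \Gamma_g$ means that for every $g \in G^{\ad}(\QQ)^+$ we have $\gamma \in g^{-1}\Gamma g$, i.e. $g\gamma g^{-1} \in \Gamma$ for all $g$ (here I am using that the tuples range over all finite subsets of $G^{\ad}(\QQ)^+$, and a single $g$ already gives the relevant constraint). Equivalently, the $G^{\ad}(\QQ)^+$-conjugacy class of the image $\bar\gamma$ of $\gamma$ in $G^{\ad}(\QQ)^+$ is contained in $\Gamma$, which is a \emph{finite} set modulo nothing — but more usefully, it is contained in the fixed arithmetic subgroup $\Gamma$, which is discrete in $G^{\ad}(\RR)^+$.

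The key step is then a rigidity/finiteness argument. Since $\Gamma$ is torsion-free and discrete, and the whole conjugacy class $\{\, g\bar\gamma g^{-1} : g \in G^{\ad}(\QQ)^+\,\}$ sits inside $\Gamma$, I would argue this class is bounded: the closure of $G^{\ad}(\QQ)^+$ acting by conjugation has orbits that are either unbounded or contained in a coset of the centraliser, and a bounded discrete set of conjugates forces the centraliser of $\bar\gamma$ in $G^{\ad}(\QQ)^+$ (equivalently in $G^{\ad}$, by Zariski density of $G^{\ad}(\QQ)^+$ in $G^{\ad}$) to have finite index, hence to be all of $G^{\ad}$. Therefore $\bar\gamma$ is central in $G^{\ad}$, and since $G^{\ad}$ is semisimple adjoint its centre is trivial, so $\bar\gamma = e$. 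This means $\gamma$ maps to the identity in $G^{\ad}(\QQ)^+$, i.e. $\gamma \in \ker(G(\QQ) \to G^{\ad}(\QQ)) = Z_G(\QQ)$, as required.

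The main obstacle I anticipate is making the "bounded conjugacy class" step rigorous without assuming $G$ semisimple: $G$ here is reductive (e.g. $\GL_2$), so I must pass to $G^{\ad}$ carefully and note that $\Gamma_g$ as defined in this section lives in $G^{\ad}(\QQ)^+$, while the statement of the lemma is about $Z_G(\QQ) \subset G(\QQ)$ — so the cleanest route is to prove $\bar\gamma = e$ in $G^{\ad}(\QQ)^+$ and then lift. The subtlety in the boundedness is that $\Gamma$ need not be cocompact (for modular curves it is not), so I cannot simply invoke compactness of $\Gamma\backslash G^{\ad}(\RR)^+$; instead I would use that for \emph{fixed} $\bar\gamma \ne e$, one can choose a one-parameter family of $g_t \in G^{\ad}(\QQ)^+$ (using Zariski-density and a suitable unipotent or semisimple element not centralising $\bar\gamma$) along which $g_t \bar\gamma g_t^{-1}$ escapes every bounded set, contradicting containment in the discrete group $\Gamma$ — or, more slickly, observe that $\bigcap_g g^{-1}\Gamma g$ is a normal subgroup of $G^{\ad}(\QQ)^+$ contained in the torsion-free discrete group $\Gamma$, and a normal subgroup of a Zariski-dense subgroup of the connected adjoint semisimple group $G^{\ad}$ is either central (hence trivial) or Zariski-dense (hence not discrete unless trivial), which finishes it.
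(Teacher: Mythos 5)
Your overall strategy is correct in outline: reduce to showing the image of $\Gamma_\infty$ in $G^{\ad}(\QQ)^+$ is trivial by showing it is a (discrete) normal subgroup that must be central, and then lift back to $Z_G(\QQ)$. You also correctly identify that $\bigcap_g g^{-1}\Gamma g$ is normalised by $G^{\ad}(\QQ)^+$. However, the argument you offer for why that forces triviality has a genuine gap, and it is precisely where the paper's proof puts its weight.

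Your ``more slick'' finish claims that a subgroup normal in a Zariski-dense subgroup of $G^{\ad}$ is either central or Zariski-dense, ``hence not discrete unless trivial.'' The parenthetical is false: Zariski density in a semisimple group is completely compatible with discreteness in the Hausdorff topology. For example $\Gamma(N)\subset\SL_2(\ZZ)$ is Zariski-dense in $\SL_2$, discrete in $\SL_2(\RR)$, nontrivial, and normal in the Zariski-dense subgroup $\SL_2(\ZZ)$. So ``Zariski-dense $\Rightarrow$ not discrete'' does not hold, and the slick version collapses. Your earlier ``boundedness of the conjugacy class'' sketch is also not convincing as written: the conjugacy class is contained in the discrete set $\Gamma$, but $\Gamma$ is unbounded (lattices in noncompact $G^{\ad}(\RR)^+$ are unbounded), so containment in $\Gamma$ does not yield boundedness, and the asserted dichotomy about conjugation orbits (``unbounded or contained in a coset of the centraliser'') is not a fact I can place.

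What makes the paper's proof work, and what you are missing, is the \emph{analytic} (real-topology) density of $G^{\ad}(\QQ)^+$ in $G^{\ad}(\RR)^+$, not just Zariski density. Since $\Gamma_\infty$ is discrete (hence closed) and normalised by a real-dense subgroup, continuity of conjugation forces $\Gamma_\infty$ to be normal in the whole connected Lie group $G^{\ad}(\RR)^+$. A discrete normal subgroup of a connected topological group is central (the standard connectedness argument), and $G^{\ad}(\RR)^+$ is adjoint, so this gives triviality directly. That use of real density together with the discrete-normal-in-connected lemma is the ingredient your argument needs but replaces with the incorrect Zariski-density inference.
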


\begin{proof}
Note that the image of $\Gamma_{\infty}$ in $G^{\ad}(\QQ)^+$ is a normal subgroup. Therefore, it is a normal subgroup of $G^{\ad}(\RR)^+$ since $G^{\ad}(\QQ)^+$ is dense in $G^{\ad}(\RR)^+$ and $\Gamma_{\infty}$ is discrete. We deduce that the image of $\Gamma_{\infty}$ in $G^{\ad}(\QQ)^+$ is trivial and, therefore, $\Gamma_{\infty}$ is contained in $Z_G(\QQ)$.
\end{proof}
Therefore, we have an embedding of $X^+$ into $\underline{S}$ corresponding to the map
\begin{align*}
x\mapsto([x]_{\Gamma_g})_g.
\end{align*}
Denoting $\underline{p}:\underline{S}\rightarrow S(\CC)$ the natural map, we let $D$ be the union of the following two subsets of $\underline{S}$: 
\begin{enumerate}
\item[(1)] $\OOO:=\{g\tilde{x}:g\in G^{\ad}(\QQ)^+\}$
\item[(2)] $X^+\setminus\{x\in X^+:\underline{p}(x)\in\underline{p}(\OOO)\}$ 
\end{enumerate}
We then define $\bf q$ as follows: for ${\bf D}$ we take the set $D$ with its action of $G^{\ad}(\QQ)^+$, for ${\bf S}$ we take the algebraic variety $S(\CC)$ with relations for all of the Zariski closed subsets of its cartesian powers defined over $E^{\ab}(\Sigma)$ and for $q$ we take the restriction of $\underline{p}$ to $D$. Since, by Corollary \ref{com}, ${\rm T}({\bf p})$ is complete and has quantifier elimination, the following lemma demonstrates that ${\bf q}\models\Th({\bf p})$:

\begin{lem}
If $g\tilde{x}=\tilde{x}$ for some $g\in G^{\ad}(\QQ)^+$, then $g$ is the identity. 
\end{lem}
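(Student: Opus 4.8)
The plan is to unwind what $g\tilde{x} = \tilde{x}$ means component-by-component in the inverse system defining $\underline{S}$, and then to invoke the Hodge-genericity of the $x_{\bar g}$ together with Proposition \ref{HG}. Recall that $\tilde{x}$ is a compatible collection $([x_{\bar g}]_{\Gamma_{\bar g}})_{\bar g}$ with each $x_{\bar g} \in X^+$ Hodge-generic, and that the action of $g \in G^{\ad}(\QQ)^+$ on $\underline{S}$ sends this to $([g x_{\bar g}]_{g\Gamma_{\bar g}g^{-1}})_{\bar g}$. So $g\tilde{x} = \tilde{x}$ forces, for the component indexed by the trivial tuple (i.e.\ the bottom of the system, $\Gamma \backslash X^+ = S(\CC)$), the equality $[g x_e]_{\Gamma} = [x_e]_{\Gamma}$, i.e.\ $g x_e = \gamma x_e$ for some $\gamma \in \Gamma$. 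More usefully, comparing at a deep enough level: for the tuple $\bar g = (e, g)$ we must have $g x_{\bar g}$ and $x_{\bar g}$ equal modulo the relevant group, and compatibility across the whole system pins down $g$.

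The cleanest route is as follows. First I would observe that, since $g\tilde{x} = \tilde{x}$, applying $\underline{p}$ gives $\underline{p}(g\tilde{x}) = \underline{p}(\tilde{x})$, but more to the point, for every tuple $\bar g$ the two points $[g x_{\bar g}]$ and $[x_{\bar g}]$ agree in $g\Gamma_{\bar g}g^{-1}\backslash X^+$ and $\Gamma_{\bar g}\backslash X^+$ respectively after matching up via the system maps; concretely there is $\delta \in G^{\ad}(\QQ)^+$ (built from elements of the various $\Gamma_{\bar g}$) with $gx_{\bar g} = \delta x_{\bar g}$. Then $\delta^{-1} g$ fixes the Hodge-generic point $x_{\bar g}$, so by Proposition \ref{HG}, $\delta^{-1}g = e$, i.e.\ $g = \delta \in \Gamma_{\bar g}$ (modulo the precise bookkeeping of which group $\delta$ lands in). Letting $\bar g$ range over all tuples and using that the component maps are compatible, I would conclude $g \in \bigcap_{\bar g} \Gamma_{\bar g} = \Gamma_\infty$. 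By Lemma \ref{TI}, $\Gamma_\infty \subseteq Z_G(\QQ)$, so the image of $g$ in $G^{\ad}(\QQ)^+$ — which is $g$ itself, as we are working in $G^{\ad}(\QQ)^+$ — is trivial. Hence $g$ is the identity.

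The main obstacle I anticipate is the bookkeeping in the middle step: the action of $g$ on the $\bar g$-component changes the indexing group from $\Gamma_{\bar g}$ to $g\Gamma_{\bar g}g^{-1}$, so one cannot naively compare $[gx_{\bar g}]_{g\Gamma_{\bar g}g^{-1}}$ with $[x_{\bar g}]_{\Gamma_{\bar g}}$ inside a single quotient. One has to pass to a common refinement — for instance the component indexed by a tuple $\bar h$ that contains both $\bar g$ and its $g$-translate, so that $\Gamma_{\bar h} \subseteq \Gamma_{\bar g} \cap g\Gamma_{\bar g}g^{-1}$ — and there deduce that $gx_{\bar h}$ and $x_{\bar h}$ differ by an element of a group that, on passing to the limit, shrinks to $\Gamma_\infty$. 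Care is also needed because $\tilde x$ was only assumed to have Hodge-generic \emph{components}; one should check that $x_{\bar h}$ is Hodge-generic (which it is, as it maps onto the Hodge-generic $x_e$, and Hodge-genericity is preserved under the finite covering maps of the tower — a point in a connected Shimura variety is Hodge-generic iff its image in any quotient is). Once these identifications are set up correctly, the appeal to Proposition \ref{HG} and Lemma \ref{TI} is immediate.
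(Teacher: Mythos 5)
Your argument is correct and matches the paper's proof in all essentials: both proceed by comparing components of $g\tilde x$ and $\tilde x$ at a common refinement in the inverse system (the paper uses the tuple $t=(h,hg^{-1})$ refining the $1$-tuple $(h)$), use Hodge-genericity of the components together with Proposition \ref{HG} to pin $g$ inside $\Gamma_{\bar h}$, take the intersection over all tuples to land in $\Gamma_\infty$, and conclude by Lemma \ref{TI}. The bookkeeping concern you raise (that the action changes the indexing group) and your resolution via a common refinement is precisely the step the paper carries out explicitly.
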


\begin{proof}
Consider the component $[x_h]_{\Gamma_h}$ of $\tilde{x}$ where, by abuse of notation, for any single element $h\in G^{\ad}(\QQ)^+$ we denote the $1$-tuple $(h)$ simply by $h$. We are supposing that 
\begin{align*}
[gx_h]_{\Gamma_{hg^{-1}}}=[x_{hg^{-1}}]_{\Gamma_{hg^{-1}}}.
\end{align*} 
Therefore,
\begin{align*}
gx_h=gh^{-1}\gamma hg^{-1}x_{hg^{-1}},
\end{align*} 
for some $\gamma\in\Gamma$. However, by compatibility, if we denote $t:=(h,hg^{-1})$, then we have a component $[x_t]_{\Gamma_t}$ such that 
\begin{align*}
 x_t=h^{-1}\gamma'hx_h=gh^{-1}\gamma''hg^{-1}x_{hg^{-1}}.
\end{align*}
It follows that $g$ belongs to the image of the intersection of the $h^{-1}\Gamma h$ for all $h\in G^{\ad}(\QQ)^+$. However, by Lemma \ref{TI}, this is trivial.
\end{proof}

\begin{lem}
The two-sorted structure ${\bf q}$ satisfies the standard fibres condition.
\end{lem}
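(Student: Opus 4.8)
The goal is to show that $\mathbf{q}$ satisfies $\mathsf{SF}$, i.e.\ that whenever $q(u) = q(v)$ for $u, v \in D$, there exists $\gamma \in \Gamma$ with $u = \gamma v$. Recall that $D$ is built as a subset of the inverse limit $\underline{S}$, assembled from the $G^{\ad}(\QQ)^+$-orbit $\OOO$ of the distinguished point $\tilde{x}$ together with the points of $X^+$ whose image in $S(\CC)$ avoids $\underline{p}(\OOO)$; and $q$ is the restriction of the natural map $\underline{p}\colon \underline{S} \to S(\CC)$. The key structural fact to exploit is that in each finite-level quotient $\Gamma_{\bar g}\backslash X^+ \cong Z_{\bar g}$, the fibres of the covering map down to $S(\CC)$ carry a transitive action of $\Gamma$ (this is exactly the geometric content recalled in the Galois representations section), and a compatible choice of fibre element at every level is what a point of $\underline{S}$ records.

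\textbf{Key steps.} First I would reduce to cases according to whether $u$ and $v$ lie in $\OOO$ or in the second piece $X^+ \setminus \{x : \underline{p}(x) \in \underline{p}(\OOO)\}$. If both lie in the second piece, then $u, v \in X^+$ under the embedding $x \mapsto ([x]_{\Gamma_g})_g$, and $\underline{p}(u) = \underline{p}(v)$ forces $p(u) = p(v)$ in $S(\CC)$; since $p$ itself realises $\Gamma\backslash X^+ = S(\CC)$, there is $\gamma \in \Gamma$ with $u = \gamma v$, and one checks $\gamma v$ is again in the second piece (its image under $\underline p$ is unchanged), so we are done. If both lie in $\OOO$, write $u = g\tilde x$, $v = g'\tilde x$; then $\underline p(g\tilde x) = \underline p(g'\tilde x)$ means the level-$(e)$ components agree, i.e.\ $[gx_e]_\Gamma = [g'x_e]_\Gamma$ — but more usefully, comparing at every level $\bar g$ and using compatibility exactly as in the preceding lemma, one extracts that $g^{-1}g'$ lies in the image in $G^{\ad}(\QQ)^+$ of $\bigcap_h h^{-1}\Gamma h$, together with the residual freedom of a single $\gamma \in \Gamma$; by Lemma \ref{TI} the ambiguity in $G^{\ad}(\QQ)^+$ collapses, leaving precisely $u = \gamma v$ for some $\gamma \in \Gamma$. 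The mixed case — $u \in \OOO$, $v$ in the second piece — cannot occur: if it did, then $\underline p(u) = \underline p(v) \in \underline p(\OOO)$, contradicting the defining condition that $v \in X^+$ has $\underline p(v) \notin \underline p(\OOO)$; so this case is vacuous. Finally I would note the converse direction of $\mathsf{SF}$ is trivial: $\Gamma$ acts by deck transformations compatibly through the whole system, so $u = \gamma v$ with $\gamma \in \Gamma$ immediately gives $q(u) = q(v)$.

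\textbf{Main obstacle.} The delicate point is the $\OOO$-versus-$\OOO$ case: one must argue that the full collection of constraints ``$[gx_{\bar g}]_{\Gamma_{\bar g}} = [g' x_{\bar g}]_{\Gamma_{\bar g}}$ for all $\bar g$'' is equivalent to ``$g^{-1}g'$ differs from an element of $\Gamma$ by something in $\bigcap_h h^{-1}\Gamma h$'', rather than merely giving level-by-level relations with no coherent global $\gamma$. This is precisely where the compatibility of the components of $\tilde x$ across the inverse system is used — the same mechanism as in the proof of the previous lemma, where the auxiliary tuple $t = (h, hg^{-1})$ forces the level-$h$ and level-$hg^{-1}$ deck transformations to be induced by a single $\gamma \in \Gamma$. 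Once that coherence is in hand, Lemma \ref{TI} (which identifies $\Gamma_\infty = \bigcap_g \Gamma_g$ with a subgroup of the centre $Z_G(\QQ)$, hence trivial in $G^{\ad}(\QQ)^+$) finishes the argument, and one concludes $u = \gamma v$.
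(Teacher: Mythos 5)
Your case split (both in $\OOO$, both in the second piece, mixed) is right, the second-piece case and the vacuity of the mixed case are handled correctly, and you correctly locate compatibility of the components of $\tilde x$ as the engine of the $\OOO$-versus-$\OOO$ case. But the final step of that case is wrong: you invoke Lemma \ref{TI}, whereas the paper invokes Hodge-genericity (Proposition \ref{HG}). This is not a cosmetic difference. The hypothesis $q(g\tilde x)=q(g'\tilde x)$ is the equality of a \emph{single} component --- $\underline p$ reads only the $(e)$-level, giving $gx_g=\gamma g'x_{g'}$ for some $\gamma\in\Gamma$ --- so the string of equalities ``$[gx_{\bar g}]_{\Gamma_{\bar g}}=[g'x_{\bar g}]_{\Gamma_{\bar g}}$ for all $\bar g$'' that you write down is not what the hypothesis gives you, and there is no ``for all $h$'' conclusion to feed into the intersection $\bigcap_h h^{-1}\Gamma h$. (Contrast this with the preceding lemma, where the hypothesis $g\tilde x=\tilde x$ really is an equality at every level; that is why Lemma \ref{TI} appears there and not here.)

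What the single $(e)$-level equality buys you, once combined with compatibility via the $(e,g,g')$-component of $\tilde x$, is an element of $G^{\ad}(\QQ)^+$ that is forced to fix a component representative of $\tilde x$. That representative is Hodge-generic by construction, so Proposition \ref{HG} says the element is the identity, and unwinding gives $g(g')^{-1}\in\Gamma$ directly --- a cleaner and stronger conclusion than the factorisation $g^{-1}g'\in\Gamma\cdot\bigcap_h h^{-1}\Gamma h$ you propose, which you would in any case not be able to establish without already having used Hodge-genericity. In short: the ingredient you omit (Proposition \ref{HG}) is the one that closes the argument, and the ingredient you invoke (Lemma \ref{TI}) does not apply because its input is absent.
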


\begin{proof}
We must show that, if $\underline{p}(g\tilde{x})=\underline{p}(\tilde{x})$, for some $g\in G^{\ad}(\QQ)^+$, then $g$ belongs to the image of $\Gamma$. The above equality implies that $gx_g=\gamma x_e$, where $x_g$ and $x_e$ are representatives for the components of $\tilde{x}$ on $g^{-1}\Gamma g\backslash X^+$ and $\Gamma\backslash X^+$, respectively, and $\gamma\in\Gamma$. Note that, by compatibility, we have another component 
\begin{align*}
[x_{(e,g)}]_{\Gamma_{(e,g)}}\in\Gamma_{(e,g)}\backslash X^+
\end{align*} 
of $\tilde{x}$ such that 
\begin{align*}
x_{(e,g)}=g^{-1}\gamma'gx_g=\gamma''x_e,
\end{align*}
where $\gamma',\gamma''\in\Gamma$. Since $x_e$ is Hodge-generic, our claim follows immediately. 
\end{proof}

Therefore, the two-sorted structure ${\bf q}$ clearly satisfies our requirements. Furthermore, the above construction immediately generalises to the case of $n$ points in $\underline{S}$ provided that their components are the images of Hodge-generic points and that they lie in distinct $G^{\ad}(\QQ)^+$ orbits.

\begin{prop}\label{numoftypes}
If the homomorphism
\begin{align*}
\Aut(\CC/L)\rightarrow\overline{\Gamma}^m
\end{align*}
associated with $\bar{z}$ does not have finite index, then the set of complete $m$-types realised over the empty set in models of $\Th^{\infty}_{\textsf{SF}}({\bf p})$ is of cardinality at least $2^{\aleph_0}$.
\end{prop}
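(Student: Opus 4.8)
\textbf{Proof proposal for Proposition \ref{numoftypes}.}

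The plan is to transfer the failure of finite index in the Galois image into a proliferation of $m$-types via the construction of models built from points $\tilde x \in \underline S$. First I would recall that the image of $\Aut(\CC/L) \to \overline\Gamma^m$ is a closed subgroup $H$; if $H$ has infinite index, then $\overline\Gamma^m / H$ (as a coset space) is infinite, and since $\overline\Gamma$ is a profinite group, $\overline\Gamma^m$ is profinite, so a closed subgroup of infinite index has index at least $\aleph_0$ and in fact the quotient space has cardinality at least $2^{\aleph_0}$ once we pass to a suitable open normal subgroup filtration — more precisely, there is a descending chain of open subgroups $U_k \supseteq H$-containing quotients that forces $|\overline\Gamma^m : H|$ to be uncountable, so the set of cosets $\{\overline\Gamma^m \cdot \xi\}$ witnessed by elements $\xi$ of $\overline\Gamma^m$ modulo $H$ has size $2^{\aleph_0}$. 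I would phrase this as: there are $2^{\aleph_0}$ distinct $H$-orbits on $\overline\Gamma^m$ acting by translation.

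Next, for each such orbit I would produce a point $\tilde x_\xi \in \underline S^m$ (an $m$-tuple of points in the inverse limit) whose components are images of Hodge-generic points in distinct $G^{\ad}(\QQ)^+$-orbits, chosen so that the coset data of $\tilde x_\xi$ over the base tuple $\bar z$ realizes the translate by $\xi$. Concretely, the fibre of $\underline S^m \to S(\CC)^m$ over $\bar z$ is a torsor under $\overline\Gamma^m$, and the points of this fibre defined over $\oQ$ (or rather, whose full coordinate data lies in a fixed algebraically closed field) correspond to $\Aut(\CC/L)$-orbits, i.e. to $H$-orbits; distinct $H$-orbits give genuinely different points $\tilde x_\xi$. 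By the construction preceding the proposition (the sequence of lemmas showing the $D$ built from $\OOO = \{g\tilde x : g \in G^{\ad}(\QQ)^+\}$ together with the complement yields a model of $\Th({\bf p})$ satisfying $\mathsf{SF}$), each $\tilde x_\xi$ gives rise to a model ${\bf q}_\xi \models \Th_{\mathsf{SF}}^\infty({\bf p})$ — I would note that the infinite-transcendence-degree axiom is arranged by taking $S(\CC)$ (or a suitable extension) as the variety sort, exactly as in the cited construction, and that the $m$-point generalization is the one flagged in the sentence ``The above construction immediately generalises to the case of $n$ points''.

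Then I would argue that the $\mathcal L_p$-type of the distinguished tuple $(\tilde x_\xi^{(1)}, \dots, \tilde x_\xi^{(m)})$ in ${\bf q}_\xi$ determines, and is determined by (via Proposition \ref{tp} and the modularity axioms), the collection of qftp's of $(q(g_1 \tilde x_\xi),\dots,q(g_n\tilde x_\xi))$ over $L$ for all tuples $\bar g$ — equivalently the smallest subvarieties of $S(F)^{\bullet}$ over $L$ through these tuples, which is precisely the coset of $\overline\Gamma^m$ recording $\tilde x_\xi$ relative to the fixed $\Aut(\CC/L)$-action, i.e. the $H$-orbit of $\xi$. Hence distinct $H$-orbits yield distinct complete $m$-types realized in models of $\Th_{\mathsf{SF}}^\infty({\bf p})$, giving at least $2^{\aleph_0}$ of them. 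The main obstacle I anticipate is the bookkeeping in the middle step: verifying carefully that the $H$-orbit of $\xi$ is genuinely recoverable from the first-order $m$-type — one must check that the qftp over $L$ sees exactly the $\Aut(\CC/L)$-invariant part of the coset data (no more, because automorphisms of $\CC$ over $L$ can move the point; no less, because the subvariety data pins down everything $\Aut(\CC/L)$-invariant), and that two points in different $H$-orbits cannot be conflated by an isomorphism of the ambient models fixing $E^{\ab}(\Sigma)$. Establishing this ``type $\leftrightarrow$ $H$-orbit'' dictionary cleanly, using Proposition \ref{real} for realizability and Proposition \ref{tp} for the reduction to the variety sort, is the crux.
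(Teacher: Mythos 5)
Your proposal is correct and follows essentially the same route as the paper: reduce the type of a tuple to variety-sort data via Proposition \ref{tp}, observe that the quantifier-free type over $L$ at each finite level is determined by an $\Aut(\CC/L)$-orbit in the fibre (so that the limiting number of complete types is the index of the Galois image), note that in the profinite group $\overline{\Gamma}^m$ the index of the closed image is either finite or $2^{\aleph_0}$ (the paper phrases this as the indices stabilising or growing at least in multiples of two along the tower; your appeal to general profinite topology is the same fact), and then invoke the $\underline{S}$-construction of models of $\Th^{\infty}_{\textsf{SF}}({\bf p})$ to show every orbit is actually realised. The ``type $\leftrightarrow$ $H$-orbit dictionary'' you flag as the crux is exactly what the paper establishes, and your outline contains the same ingredients.
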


\begin{proof}
For each $i\in\{1,...,m\}$, let $\tilde{x}_i\in\underline{S}$ be a point in the pre-image of $p(x_i)$ under the map $\underline{p}$ and let
\begin{align*}
{\bf q}:=\langle{\bf D},{\bf S},q\rangle
\end{align*}
be the corresponding model of $\Th^{\infty}_{\textsf{SF}}({\bf p})$ constructed above. 

Since $L$ is finitely generated, it suffices to consider the complete type of $(\tilde{x}_1,\ldots,\tilde{x}_m)$ over $L$. As in Proposition \ref{tp}, this is equivalent to
\begin{align}\label{uniontp}
\bigcup_{\bar{g}}{\rm qftp}_{\mathcal{L}}(q_{\bar{g}}(\tilde{x}_1),\ldots,q_{\bar{g}}(\tilde{x}_m)/L),
\end{align}
ranging over all tuples $\bar{g}=(e,g_1,\ldots,g_n)$ of distinct elements in $G^{\ad}(\QQ)^+$, where we define
\begin{align*}
q_{\bar{g}}(\tilde{x}_i):=(q(\tilde{x}_i),q(g_1\tilde{x}_i),\ldots,q(g_n\tilde{x}_i)),
\end{align*}
which is an element of $Z_{\bar{g}}$. The quantifier-free type 
\begin{align*}
{\rm qftp}_{\mathcal{L}}(q_{\bar{g}}(\tilde{x}_1),\ldots,q_{\bar{g}}(\tilde{x}_m)/L)
\end{align*}
is equivalent to the minimal algebraic subset of $Z_{\bar{g}}^m$ defined over $L$ containing the tuple. This is a subset of the fibre over $(p(x_1)),\ldots,p(x_m))$ of the finite morphism
\begin{align*}
Z_{\bar{g}}^m\rightarrow S(\CC)^m.
\end{align*}
In particular, it is $0$-dimensional. In fact, it is the $\Aut(\CC/L)$ orbit in this fibre containing the tuple. The union (\ref{uniontp}) is therefore equivalent to the $\Aut(\CC/L)$ orbit of $(\tilde{x}_1,...,\tilde{x}_m)$ in $\underline{S}$.

Since the choice of $(\tilde{x}_1,...,\tilde{x}_m)$ was arbitrary, we conclude that the number of distinct realisable complete $m$-types is at least, up to a finite factor, the index of $\Aut(\CC/L)$ in $\overline{\Gamma}^m$. However, since this index is infinite, it must be at least uncountable.

\end{proof}

\begin{proof}{\bf(Theorem \ref{OI})}
We suppose that $\Th^{\infty}_{\textsf{SF}}({\bf p})$ is $\aleph_1$-categorical. By Proposition \ref{numoftypes}, if
\begin{align*}
\Aut(\CC/L)\rightarrow\overline{\Gamma}^m
\end{align*}
is not of finite index, then the set of complete $m$-types realised over the empty set in models of $\Th^{\infty}_{\textsf{SF}}({\bf p})$ is at least uncountable. However, by Keisler's theorem this is a contradiction. Therefore, the index is finite.
\end{proof}

\subsection{Sufficient conditions}

Consider a Shimura variety $S$ of dimension $1$ and let ${\bf p}$ be the corresponding two-sorted structure. We define a pregeometry (see \cite{K}, $\mathsection$1) on the class of two-sorted structures  
\begin{align*}
{\bf q}:=\langle{\bf D},{\bf S},q\rangle\models \Th^{\infty}_{\textsf{SF}}({\bf p})
\end{align*}
by defining a closure operator $\rm cl_{\bf q}$, which on $D$ is defined as $q^{-1}\circ{\rm acl}\circ q$ and on $S(F)$ simply as $q^{-1}\circ{\rm acl}$, where $\rm acl$ denotes taking all points in $S(F)$ with coordinates in the algebraic closure of the coordinates of the points in question. 

We define the following two arithmo-geometric conditions:

\begin{con}\label{omega1}
For any finite collection $x_1,\ldots,x_m\in X^+$ of Hodge-generic points in distinct $G^{\ad}(\QQ)^+$ orbits, the induced homomorphism
\begin{align*}
\Aut(\CC/L)\rightarrow\overline{\Gamma}^m,
\end{align*}
where we denote by $L$ the field generated over $E^{\ab}(\Sigma)$ by the coordinates of the $p(x_i)$, has finite index.
\end{con}

\begin{con}\label{omega2}
For any algebraically closed field $F\subset\CC$ and any finite collection $x_1,\ldots,x_m\in X^+$ of Hodge-generic points in distinct $G^{\ad}(\QQ)^+$ orbits such that 
\begin{enumerate}
\item[(1)] $p(x_i)\notin S(F)$ for all $i\in\{1,\ldots,m\}$ and
\item[(2)] the field $L$ generated over $F$ by the coordinates of the $p(x_i)$ has transcendence degree $1$,
\end{enumerate}
the induced homomorphism
\begin{align*}
\Aut(\CC/L)\rightarrow\overline{\Gamma}^m,
\end{align*}
has finite index.
\end{con}

We recall again that though these indices are well-defined, the homomorphism itself is only well-defined up to conjugation.

\begin{teo}\label{AK}
If Conditions \ref{omega1} and \ref{omega2} are satisfied, $\Th^{\infty}_{\textsf{SF}}({\bf p})$ is $\kappa$-categorical for all infinite cardinalities $\kappa$.
\end{teo}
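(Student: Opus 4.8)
The plan is to invoke the machinery of quasi-minimal excellence as a black box, so the bulk of the work is to verify its hypotheses for the pregeometry $\mathrm{cl}_{\bf q}$ on the class of models of $\Th^{\infty}_{\textsf{SF}}({\bf p})$. Following the strategy of \cite{Z} and \cite{BHHKK}, one proves that this class, equipped with the closure operator and the notion of embedding coming from $\mathcal{L}_p$-structure, forms a quasi-minimal pregeometry class in the sense of \cite{BHHKK}, Definition 2.2; the main theorem there (or \cite{K}) then yields that the monster model is unique in every uncountable cardinality, and categoricity in all infinite cardinals follows together with a separate countable-model argument using that $\mathrm{T}({\bf p})$ is complete with quantifier elimination (Corollary \ref{com}).

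First I would establish the pregeometry axioms QM1--QM5. QM1 (each closed set is a model) and QM2 (exchange) are essentially formal, using superstability of $\mathrm{T}({\bf p})$ and the fact that $\mathrm{acl}$ on an algebraically closed field is a pregeometry. Uniqueness of the generic type QM4 was already observed in Section 2.5: in dimension $1$ every point is special or Hodge-generic, and special points lie in $\mathrm{dcl}(\emptyset)$ by Theorem \ref{dcl}, so over any closed set there is a unique non-algebraic quantifier-free $\mathcal{L}_G$-type, whose image under $q$ is controlled by the complement of the Hecke subvarieties of Lemma \ref{nsp}. The $\omega$-homogeneity over closed sets and over $\emptyset$ (QM3, QM5) is where Conditions \ref{omega1} and \ref{omega2} enter: given two embeddings of a closed set $U$, one extends them by one Hodge-generic point at a time, and the obstruction to matching up the full Hecke orbit of that point is exactly that the image of $\mathrm{Aut}(\CC/L) \to \overline{\Gamma}^m$ has finite index — Condition \ref{omega1} handles the case where the new point is algebraic over $U$ (the "$\aleph_1$" situation, countable base), and Condition \ref{omega2} handles the transcendence-degree-one step needed to build models in higher cardinality. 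The finite-index statement means a field automorphism accounts for all but finitely many constraints in the Hecke orbit, and the remaining finitely many are absorbed by choosing the lift of the point carefully in the first place, exactly as in the proof of Proposition 3.1 combined with Proposition \ref{real}.

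The hard part will be the excellence/amalgamation condition: one must show that over an independent system of closed sets indexed by a finite cube, the unions remain "nice" (the relevant types have unique non-splitting extensions and amalgamate), so that the crown lemma of quasi-minimal excellence applies. Here the technical point is that realising a type over such a union reduces, via Proposition \ref{tp} and the modularity axioms $\textsf{MOD}$, to realising a compatible family of $0$-dimensional $\mathrm{Aut}(\CC/-)$-orbits in the Hecke tower, and one checks these glue using the finite-index input from Conditions \ref{omega1}--\ref{omega2} applied along the tower together with the coinitiality of the normal subgroups $\Gamma_{\bar g}$ noted in Section 2.4. Once excellence is in hand, \cite{BHHKK}, Theorem 2.3 (or \cite{K}) gives categoricity in all uncountable powers; $\aleph_0$-categoricity is immediate from quantifier elimination since the class has, over $\emptyset$, only countably many complete types (the special points plus one generic type in each finite tuple configuration), so the Ryll-Nardzewski-style count via Corollary \ref{com} applies. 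Assembling these yields $\kappa$-categoricity for all infinite $\kappa$.
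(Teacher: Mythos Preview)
Your overall strategy is right in spirit, but there are two genuine gaps and one piece of unnecessary labour.

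First, the unnecessary labour: you propose to verify the excellence/amalgamation condition over independent cubes as the ``hard part''. The whole point of \cite{BHHKK} is that this is \emph{not} needed: their main theorem (Theorem 2.2) shows that the axioms of a quasiminimal pregeometry structure already force excellence, so categoricity in all uncountable cardinals follows without any separate amalgamation argument. The paper accordingly never touches excellence directly.

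Second, your argument for $\aleph_0$-categoricity is wrong. You write that countably many complete types over $\emptyset$ plus quantifier elimination gives $\aleph_0$-categoricity ``Ryll--Nardzewski-style''. Ryll--Nardzewski requires \emph{finitely} many $n$-types for each $n$, not countably many, and it is a theorem about first-order theories, whereas $\Th^{\infty}_{\textsf{SF}}({\bf p})$ is an $\mathcal{L}_{\omega_1,\omega}$-sentence; the first-order part $\Th({\bf p})$ alone is certainly not $\aleph_0$-categorical. The paper instead obtains categoricity for $\kappa\leq\aleph_1$ from \cite{K}, Corollary 2.2, by verifying $\aleph_0$-homogeneity over $\emptyset$ (from Condition~\ref{omega1}) and $\aleph_0$-homogeneity over countable models (from Condition~\ref{omega2}); note also that your allocation of the two conditions is slightly off---Condition~\ref{omega1} is not about a new point algebraic over $U$ but about homogeneity over $\emptyset$.

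Third, and most structurally, you are missing the identification of the axiomatic class with the class produced by the \cite{BHHKK} machinery. That machinery takes a single quasiminimal pregeometry structure ${\bf p}$ and manufactures a class $\mathcal{K}({\bf p})$ which it proves categorical. It does \emph{not} directly apply to the class $\mathcal{K}$ of all models of an $\mathcal{L}_{\omega_1,\omega}$-sentence. The paper closes this gap explicitly: having $\aleph_0$-categoricity of $\mathcal{K}$ in hand (from \cite{K}, Corollary 2.2), it uses that $\mathcal{K}$ is an abstract elementary class with L\"owenheim--Skolem number $\aleph_0$, so every model is a direct limit of copies of the unique countable model ${\bf q_0}={\bf p_0}$, and hence $\mathcal{K}=\mathcal{K}({\bf p_0})=\mathcal{K}({\bf p})$. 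Without this step you have categoricity of $\mathcal{K}({\bf p})$ but not of $\Th^{\infty}_{\textsf{SF}}({\bf p})$.
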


First we will prove the following:

\begin{lem}\label{A1}
If Conditions \ref{omega1} and \ref{omega2} are satisfied, $\Th^{\infty}_{\textsf{SF}}({\bf p})$ is $\kappa$-categorical for $\kappa\leq\aleph_1$.
\end{lem}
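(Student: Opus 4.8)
The plan is to deduce Lemma \ref{A1} from the machinery of quasi-minimal excellence as in \cite{BHHKK} and \cite{K}, using the pregeometry $\rm cl_{\bf q}$ defined above. First I would verify that $(\Th^{\infty}_{\textsf{SF}}({\bf p}), \rm cl_{\bf q})$ fits the axiomatic framework of a quasi-minimal pregeometry structure: one must check that $\rm cl_{\bf q}$ is indeed a pregeometry (monotonicity, finite character, exchange and the closure property), that it is $\aleph_0$-homogeneous over closed sets and the empty set, and that the generic type is unique — the latter being exactly the point noted earlier, namely that in dimension $1$ every point of $D$ is special or Hodge-generic, the special ones lying in $\rm dcl(\emptyset)$ by Theorem \ref{dcl}, so that the non-algebraic $1$-type over a closed set is unique. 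Exchange comes from the fact that $\rm acl$ on the field side is a pregeometry and $q$ has ``standard'' fibres carrying a transitive $\Gamma$-action; finite character and monotonicity are immediate from the corresponding properties of algebraic closure.

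The heart of the lemma, and the step I expect to be the main obstacle, is establishing the countable closure property together with the relevant homogeneity/extension property ($\aleph_0$-homogeneity over closed submodels), because it is precisely here that Conditions \ref{omega1} and \ref{omega2} enter. Concretely, given a closed subset of a model and a partial isomorphism onto a closed subset of another model, I must be able to extend it one element at a time; when the new element $x$ is Hodge-generic, its quantifier-free $\mathcal{L}_p$-type over the closed set is determined (via Proposition \ref{tp}) by the collection of types $\bigcup_{\bar g}{\rm qftp}_{\mathcal{L}_{S}}(q_{\bar g}(x)/L)$, each of which is pinned down by an $\Aut(\CC/L)$-orbit in a finite fibre of $Z_{\bar g}\to Z$. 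Conditions \ref{omega1} and \ref{omega2} say that the image of $\Aut(\CC/L)\to\overline{\Gamma}^m$ has finite index — in the two regimes $L/E^{\ab}(\Sigma)$ algebraic (for $\aleph_0$-homogeneity over $\emptyset$, needed for categoricity below $\aleph_1$) and $\mathrm{trdeg}(L/F)=1$ (for the one-step extension in the construction of a model of size $\aleph_1$). Finite index guarantees there are only finitely (hence at most countably) many such orbits, so the type over the closed set is isolated, can be transported through $\sigma$ as in Proposition \ref{real}, and is realised in the target by $\omega$-saturation; moreover it bounds the number of realisations, giving countable closure.

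With these properties in hand, the back-and-forth proceeds in the standard way. For $\kappa\le\aleph_0$ one builds an isomorphism by exhausting countable models; for $\kappa=\aleph_1$ one writes each model as an $\omega_1$-chain of countable closed submodels — using countable closure to arrange that the submodels are themselves closed and that the chain is continuous — and then extends a chosen isomorphism between the bottom submodels step by step, at successor stages using the one-element extension property (justified by Conditions \ref{omega1} and \ref{omega2}) and at limit stages taking unions. The infinite-transcendence-degree axiom ensures that the ``base'' of the pregeometry is nontrivial and that models of a given cardinality below $\aleph_1$ have the expected dimension, so that the chains line up. I would then cite \cite{BHHKK}, Theorem (the quasi-minimal excellence / categoricity theorem), respectively \cite{K}, to package the verification: once the pregeometry structure is shown to be quasi-minimal with the countable closure property, $\aleph_0$- and $\aleph_1$-categoricity follow formally, which is precisely the statement of Lemma \ref{A1}. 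The remaining obstacle to full $\kappa$-categoricity for all $\kappa$ — the excellence/amalgamation of independent systems of models — is deferred, as it requires the higher-dimensional analogue of the extension argument and is presumably treated in the proof of Theorem \ref{AK} proper.
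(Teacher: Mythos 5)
Your proposal takes essentially the same route as the paper: verify Kirby's quasi-minimal pregeometry axioms I.1--I.3 and II.1, use Condition~\ref{omega1} to establish $\aleph_0$-homogeneity over the empty set and Condition~\ref{omega2} to establish $\aleph_0$-homogeneity over countable closed submodels (the paper's Lemmas~\ref{L1} and~\ref{L2}), and then invoke \cite{K}, Corollary~2.2. One small correction: Condition~\ref{omega1} does not say that $L/E^{\ab}(\Sigma)$ is algebraic --- the $x_i$ there are arbitrary Hodge-generic points, so $L$ can have any transcendence degree over $E^{\ab}(\Sigma)$; the algebraicity hypothesis in the homogeneity step applies to the \emph{new} element $\tau$ being extended over the $x_i$, not to $L$ itself.
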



We refer to \cite{K}: note that the pregeometry axioms I.1, I.2, I.3  and the $\aleph_0$-homogeneity axiom II.1 of \cite{K}, \S1 are easily verified. Thus, by \cite{K}, Corollary 2.2, $\kappa$-categoricity for $\kappa\leq\aleph_1$ relies on axiom II.2.

\begin{lem}\label{L1}
If Condition \ref{omega1} holds then the class of models of $\Th^{\infty}_{\textsf{SF}}({\bf p})$ satisfies $\aleph_0$-homogeneity over the empty set. 
\end{lem}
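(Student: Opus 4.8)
The plan is a back-and-forth argument. Since ${\rm T}({\bf p})$ eliminates quantifiers by Corollary \ref{com}, $\aleph_0$-homogeneity over $\emptyset$ reduces to showing that any partial $\mathcal{L}_p$-isomorphism $\rho$ between finite tuples $\bar a$ from ${\bf q}$ and $\bar b$ from ${\bf q}'$ (models of $\Th^{\infty}_{\textsf{SF}}({\bf p})$) with the same quantifier-free type over $\emptyset$ can be extended, in either direction, by one more point, while staying inside models of $\Th^{\infty}_{\textsf{SF}}({\bf p})$. Closing $\bar a$ under the $G^{\ad}(\QQ)^+$-orbits of its ${\bf D}$-components and under the countable field $L$ generated over $E^{\ab}(\Sigma)$ by the coordinates of the images of those orbits, Theorem \ref{dcl} and the scheme $\textsf{SP}$ put any special points among the data into ${\rm dcl}(\emptyset)$, matched canonically, while faithfulness of the action matches any new ${\bf D}$-point lying in an orbit already present. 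If the new point $\alpha$ lies in the field sort, then, since ${\rm qftp}_{\mathcal{L}_{S}}(\alpha/L)$ is just the locus of $\alpha$ over $L$ and $F'$ is algebraically closed of infinite transcendence degree, the field embedding $\sigma: L\to F'$ underlying $\rho$ extends to $L(\alpha)$, extending $\rho$; this uses neither Condition \ref{omega1} nor $\textsf{SF}$. So we may assume $\alpha=x_{m+1}$ is Hodge-generic and lies in a fresh orbit, and that $L$ is generated by the $q$-images of the orbits of the Hodge-generic ${\bf D}$-components $x_1,\dots,x_m$, which are in distinct orbits.

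By Proposition \ref{tp} (and again Corollary \ref{com}), ${\rm qftp}_{\mathcal{L}_p}(\bar a,x_{m+1})$ is determined by the family $\bigcup_{\bar g}{\rm qftp}_{\mathcal{L}_{S}}((q_{\bar g}(x_1),\dots,q_{\bar g}(x_m),q_{\bar g}(x_{m+1}))/E^{\ab}(\Sigma))$ over all tuples $\bar g=(e,g_1,\dots,g_n)$ of distinct elements of $G^{\ad}(\QQ)^+$, each $q_{\bar g}$ landing in $Z_{\bar g}$ by the scheme $\textsf{MOD}$. Exactly as in the proof of Proposition \ref{numoftypes}, each member of this family is an automorphism-orbit inside a finite fibre of the relevant finite covering of the $Z_{\bar g}$, and the whole family is controlled by the image of the appropriate automorphism group in an inverse limit $\overline{\Gamma}^{m+1}$. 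This is where Condition \ref{omega1} enters: that image has finite index -- directly when the coordinates of the $q(x_i)$ are algebraic, on identifying $x_1,\dots,x_m,x_{m+1}$ with points of $X^+$, and in general because the image can only shrink under specialisation -- so the inverse system of finite images stabilises and the family is determined by its restriction to a single tuple $\bar g_0$. Thus ${\rm qftp}_{\mathcal{L}_p}(\bar a,x_{m+1})$ is effectively finitely generated, and the model construction of Proposition \ref{numoftypes}, applied to a compatible system in $\underline{S}$ refining the data of $\bar b$ with the $\bar g_0$-level prescribed, produces a model of $\Th^{\infty}_{\textsf{SF}}({\bf p})$ extending ${\bf q}'$ together with an $x_{m+1}'$ realising $\rho({\rm qftp}_{\mathcal{L}_p}(\bar a,x_{m+1}))$; since the clauses $x_{m+1}'\neq gx_i'$ and ``$x_{m+1}'$ is fixed by no nontrivial $g$'' lie in this quantifier-free type, $x_{m+1}'$ is genuinely Hodge-generic and in a fresh orbit. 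Running this symmetrically yields the back-and-forth.

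The main obstacle is the clash between realising types and the $\mathcal{L}_{\omega_1,\omega}$-axiom $\textsf{SF}$: one cannot simply pass to an $\omega$-saturated elementary extension, since the infinitary disjunction in $\textsf{SF}$ would fail there, so a plain compactness argument is unavailable and the required point must be produced inside a genuine model of $\Th^{\infty}_{\textsf{SF}}({\bf p})$. Condition \ref{omega1} is precisely what resolves this, collapsing each relevant type to finitely much data that can then be realised via the $\underline{S}$-construction. The two delicate points are the stabilisation of the inverse system of Galois images (including the reduction of the transcendental-coordinate case to the algebraic one) and checking that the construction of Proposition \ref{numoftypes} can be carried out relative to the already-given tuple $\bar b$, i.e. that the amalgamation is consistent.
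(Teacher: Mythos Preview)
Your overall diagnosis is right: the obstacle is that $\textsf{SF}$ forbids passing to a saturated extension, and Condition \ref{omega1} is what collapses the relevant $\mathcal{L}_p$-type to the data at a single level $\bar g_0$. But your proposed resolution has a genuine gap. The $\aleph_0$-homogeneity axiom (in the sense of \cite{K}) requires you to find the new point $\tau'$ \emph{inside the given model} ${\bf q}'$, not in some model extending it. Invoking the $\underline{S}$-construction from Proposition \ref{numoftypes} to build a model ``extending ${\bf q}'$'' therefore does not establish the lemma, quite apart from the amalgamation issue you flag as delicate and leave unresolved. The paper's argument avoids this entirely: once the level $\bar g$ at which $\Aut(\CC/L_{\bar g})\to\overline{\Gamma}_{\bar g}^{\,m+1}$ becomes surjective has been identified, one uses Proposition \ref{real} (which holds in \emph{every} model of ${\rm T}({\bf p})$, hence in ${\bf q}'$ itself) to produce $\tau'\in D'$ whose $\bar g$-level data matches that of $\tau$ under $\sigma$. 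The remaining work is then purely a Galois-theoretic computation showing that surjectivity at level $\bar g$ forces the $\Aut(\CC/\sigma(L_{\bar g}))$-orbit in each higher fibre to be the whole fibre as well, so that \emph{all} higher-level types of $\tau'$ automatically agree with those of $\tau$. No extension of ${\bf q}'$ and no amalgamation is needed.

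Two smaller points. First, the paper explicitly takes the new point $\tau$ to lie in ${\rm cl}_{\bf q}(x_1,\dots,x_m)$, i.e.\ $q(\tau)\in S(\bar L)$; this is part of what the axiom asks for, and it is what lets one embed everything into $\CC$ by choosing a single embedding of $\bar L$. Second, your remark that Condition \ref{omega1} applies ``directly when the coordinates are algebraic \dots\ and in general because the image can only shrink under specialisation'' is backwards: Condition \ref{omega1} is stated for arbitrary Hodge-generic points of $X^+$ and is applied directly after embedding $\bar L\hookrightarrow\CC$ and lifting the $q(x_i),q(\tau)$ along $p$; specialisation plays no role in this lemma.
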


\begin{proof}
Suppose that
\begin{align*}
{\bf q}:=\langle{\bf D},{\bf S},q\rangle,\ {\bf q'}:=\langle{\bf D'},{\bf S'},q'\rangle
\end{align*}
are two models of $\Th^{\infty}_{\textsf{SF}}({\bf p})$ and suppose that $x_1,\ldots,x_m\in D$ is a collection of Hodge-generic points in distinct $G^{\ad}(\QQ)^+$ orbits. Note that we may ignore the possibility of any of the $x_i$ belonging to $S(F)$. Suppose that $x'_1,\ldots,x'_m\in D'$ is another collection of Hodge-generic points in distinct $G^{\ad}(\QQ)^+$ orbits such that
\begin{align*}
{\rm qftp}_{\mathcal{L}}(x_1,\ldots,x_m)={\rm qftp}_{\mathcal{L}}(x'_1,\ldots,x'_m).
\end{align*}
Equivalently, we have a finite partial isomorphism between ${\bf q}$ and ${\bf q'}$. If we denote by $L$ the field generated over $E^{\ab}(\Sigma)$ by the coordinates of the $q(x_i)$, then this isomorphism on the variety structures yields an embedding $\sigma$ of $L$ into $F'$ fixing $E^{\ab}(\Sigma)$.

Let $\tau\in D$ be a Hodge-generic point in a separate $G^{\ad}(\QQ)$ orbit from the $x_i$ such that the coordinates of $q(\tau)$ belong to the algebraic closure $\bar{L}$ of $L$ in $F$. We must show that there exists a $\tau'\in D'$ such that
\begin{align*}
{\rm qftp}_{\mathcal{L}}(x_1,\ldots,x_m,\tau)={\rm qftp}_{\mathcal{L}}(x'_1,\ldots,x'_m,\tau').
\end{align*}
This suffices to prove the lemma since, if $\tau$ were a point in $S(F)$, we would proceed as in the proof of Proposition \ref{qec}. By Proposition \ref{tp}, the left-hand side is determined by
\begin{align}\label{1}
\bigcup_{\bar{g}}{\rm qftp}_{\mathcal{L}}(q_{\bar{g}}(x_1),\ldots,q_{\bar{g}}(x_m),q_{\bar{g}}(\tau))
\end{align}
over all tuples ${\bar{g}}:=(e,g_1,\ldots,g_n)$. For such a tuple we denote by $L_{\bar{g}}$ the field generated over $L$ by the coordinates of the $q_{\bar{g}}(x_i)$ and $q_{\bar{g}}(\tau)$ and we let
\begin{align*}
\overline{\Gamma}_{\bar{g}}:=\varprojlim_{{\bar{h}}}\Gamma_{\bar{g}}/\Gamma_{{\bar{h}}},
\end{align*}	
where ${\bar{h}}$ varies over all tuples such that $\Gamma_{{\bar{h}}}$ is contained in $\Gamma_{\bar{g}}$ and is normal in $\Gamma$. 

Choose an embedding of $\bar{L}$ in $\CC$ yielding, in particular, an embedding of $S(L_{(e)})$ in $S(\CC)$. By Condition \ref{omega1}, the homomorphism 
\begin{align*}
\Aut(\CC/L_{(e)})\rightarrow\overline{\Gamma}^{m+1}
\end{align*}
corresponding to the image of $(q(x_1),\ldots,q(x_m),q(\tau))$ in $S(\CC)^{m+1}$ has finite index. Therefore, there exists a tuple ${\bar{g}}$ such that the homomorphism
\begin{align*}
\Aut(\CC/L_{\bar{g}})\rightarrow\overline{\Gamma}_{\bar{g}}^{m+1}
\end{align*}
corresponding to the image of $(q_{\bar{g}}(x_1),\ldots,q_{\bar{g}}(x_m),q_{\bar{g}}(\tau))$ in $Z_{\bar{g}}^{m+1}$ is surjective. 

It is a simple property of types that (\ref{1}) is equivalent to
\begin{align*}
{\rm qftp}_{\mathcal{L}}(q_{\bar{g}}(x_1),\ldots,q_{\bar{g}}(x_m),q_{\bar{g}}(\tau))\cup\bigcup_{\bar{h}}{\rm qftp}_{\mathcal{L}}(q_{\bar{h}}(x_1),\ldots,q_{\bar{h}}(x_m),q_{\bar{h}}(\tau)/L_{\bar{g}}),
\end{align*}
again varying over all tuples ${\bar{h}}$ such that $\Gamma_{{\bar{h}}}$ is contained in $\Gamma_{\bar{g}}$ and is normal in $\Gamma$. Therefore, we choose a $\tau'\in D'$ such that
\begin{align*}
{\rm qftp}_{\mathcal{L}}(q'_{\bar{g}}(x'_1),\ldots,q'_{\bar{g}}(x'_m),q'_{\bar{g}}(\tau'))={\rm qftp}_{\mathcal{L}}(q_{\bar{g}}(x_1),\ldots,q_{\bar{g}}(x_m),q_{\bar{g}}(\tau)),
\end{align*}
the existence of which is guaranteed by Proposition \ref{real}. This induces an extension of $\sigma$ to $L_{\bar{g}}$ and we claim that 
\begin{align}\label{2}
{\rm qftp}_{\mathcal{L}}(q'_{\bar{h}}(x'_1),\ldots,q'_{\bar{h}}(x'_m),q'_{\bar{h}}(\tau')/\sigma(L_{\bar{g}}))
\end{align}
is equal to
\begin{align}\label{3}
{\rm qftp}_{\mathcal{L}}(q_{\bar{h}}(x_1),\ldots,q_{\bar{h}}(x_m),q_{\bar{h}}(\tau)/L_{\bar{g}}).
\end{align}
To see this, recall that we have a finite morphism $\pi: Z^{m+1}_{\bar{h}}\rightarrow Z^{m+1}_{\bar{g}}$ defined over $E^{\ab}$ and (\ref{3}) is determined by the $\Aut(\CC/L_{\bar{g}})$ orbit containing 
\begin{align*}
(q_{\bar{h}}(x_1),\ldots,q_{\bar{h}}(x_m),q_{\bar{h}}(\tau))\in Z_{\bar{h}}^{m+1}
\end{align*} 
in the fibre over 
\begin{align*}
(q_{\bar{g}}(x_1),\ldots,q_{\bar{g}}(x_m),q_{\bar{g}}(\tau))\in Z_{\bar{g}}^{m+1}.
\end{align*}
However, this orbit is the entire fibre $\mathcal{F}$ since the induced homomorphism
\begin{align*}
\varphi:\Aut(\CC/L_{\bar{g}})\rightarrow(\Gamma_{\bar{g}}/\Gamma_{{\bar{h}}})^{m+1}
\end{align*}
is surjective. Similarly, (\ref{2}) is determined by the $\Aut(\CC/\sigma(L_{\bar{g}}))$ orbit containing 
\begin{align*}
(q'_{\bar{h}}(x'_1),\ldots,q'_{\bar{h}}(x'_m),q'_{\bar{h}}(\tau'))\in Z_{\bar{h}}^{m+1}
\end{align*} 
in the fibre over 
\begin{align*}
(q'_{\bar{g}}(x'_1),\ldots,q'_{\bar{g}}(x'_m),q'_{\bar{g}}(\tau'))\in Z_{\bar{g}}^{m+1}.
\end{align*}
Our claim will follow, hence completing the proof, if we can show that this orbit is also the entire fibre and equal to $\sigma(\mathcal{F})$. To see this, choose an extension of $\sigma$ to $\Aut(\CC/L_{\bar{g}})$. Let
\begin{align*}
y'\in\pi^{-1}(q'_{\bar{g}}(x'_1),\ldots,q'_{\bar{g}}(x'_m),q'_{\bar{g}}(\tau')),
\end{align*} 
equal to $\sigma(y)$, where
\begin{align*}
y\in\pi^{-1}((q_{\bar{g}}(x_1),\ldots,q_{\bar{g}}(x_m),q_{\bar{g}}(\tau))
\end{align*}
and let $\tau\in\Aut(\CC/\sigma(L_{\bar{g}}))$. Therefore, $\sigma^{-1}\tau\sigma\in\Aut(\CC/L_{\bar{g}})$ and
\begin{align*}
\tau(y')=\tau\sigma(y)=\sigma\sigma^{-1}\tau\sigma(y)=\sigma(\varphi(\sigma^{-1}\tau\sigma)(y))=\varphi(\sigma^{-1}\tau\sigma)(y').
\end{align*}
Therefore, the homomorphism
\begin{align*}
\Aut(\CC/\sigma(L_{\bar{g}}))\rightarrow(\Gamma_{\bar{g}}/\Gamma_{{\bar{h}}})^{m+1}
\end{align*}
sends $\tau$ to $\phi(\sigma^{-1}\tau\sigma)$. In particular, it is surjective and the claim follows.
\end{proof}

\begin{lem}\label{L2}
If Condition \ref{omega2} holds then the class of models of $\Th^{\infty}_{\textsf{SF}}({\bf p})$ satisfies $\aleph_0$-homogeneity over countable models. 
\end{lem}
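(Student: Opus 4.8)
The plan is to mirror the proof of Lemma \ref{L1}, replacing the base field $E^{\ab}(\Sigma)$ by the (countable, algebraically closed) base model $F$, and replacing Condition \ref{omega1} by Condition \ref{omega2}. Concretely, suppose ${\bf q}=\langle{\bf D},{\bf S},q\rangle$ and ${\bf q'}=\langle{\bf D'},{\bf S'},q'\rangle$ are models of $\Th^{\infty}_{\textsf{SF}}({\bf p})$ containing a common countable submodel ${\bf q}_0=\langle{\bf D}_0,{\bf S}_0,q_0\rangle$, with $S(F_0)$ the variety part of ${\bf q}_0$ and $F_0$ algebraically closed. Given Hodge-generic points $x_1,\dots,x_m\in D$ in distinct $G^{\ad}(\QQ)^+$-orbits, none in an orbit meeting $D_0$, and $x'_1,\dots,x'_m\in D'$ with the same property and ${\rm qftp}_{\mathcal{L}_p}(x_1,\dots,x_m/{\bf q}_0)={\rm qftp}_{\mathcal{L}_p}(x'_1,\dots,x'_m/{\bf q}_0)$, we get a partial isomorphism and thus an embedding $\sigma$ of $L:=F_0(q(x_i):i)$ into $F'$ over $F_0$. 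We must extend by one further Hodge-generic point $\tau\in D$, in a new orbit, with $q(\tau)$ algebraic over $L$ — hence the field $L':=L(q(\tau))$ has transcendence degree $1$ over $F_0$, which is exactly the hypothesis of Condition \ref{omega2}.

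The key steps, in order: (i) embed $\bar{L}$ into $\CC$, so the images of $(q(x_1),\dots,q(x_m),q(\tau))$ sit in $S(\CC)^{m+1}$ and Condition \ref{omega2} applies to give that $\Aut(\CC/L')\to\overline{\Gamma}^{m+1}$ has finite index; (ii) choose a tuple $\bar{g}=(e,g_1,\dots,g_n)$ large enough that the associated map $\Aut(\CC/L'_{\bar{g}})\to\overline{\Gamma}_{\bar{g}}^{m+1}$ is surjective, where $L'_{\bar{g}}$ adjoins to $L'$ the coordinates of the $q_{\bar{g}}(x_i)$ and $q_{\bar{g}}(\tau)$; (iii) using Proposition \ref{real}, pick $\tau'\in D'$ realising ${\rm qftp}_{\mathcal{L}_{S}}(q_{\bar{g}}(x_1),\dots,q_{\bar{g}}(x_m),q_{\bar{g}}(\tau))^{\sigma}$, which extends $\sigma$ to $L'_{\bar{g}}$; (iv) run the transfer-of-surjectivity argument from the end of Lemma \ref{L1} verbatim: for any finer tuple $\bar{h}$ with $\Gamma_{\bar{h}}\trianglelefteq\Gamma$ contained in $\Gamma_{\bar{g}}$, surjectivity of $\Aut(\CC/L'_{\bar{g}})\to(\Gamma_{\bar{g}}/\Gamma_{\bar{h}})^{m+1}$ is carried across by any lift of $\sigma$ to $\Aut(\CC/L'_{\bar{g}})$, via the identity $\tau(y')=\varphi(\sigma^{-1}\tau\sigma)(y')$, so the $\Aut(\CC/\sigma(L'_{\bar{g}}))$-orbit in each fibre of $\pi:Z^{m+1}_{\bar{h}}\to Z^{m+1}_{\bar{g}}$ is the whole fibre. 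This forces ${\rm qftp}_{\mathcal{L}_{S}}(q'_{\bar{h}}(x'_1),\dots,q'_{\bar{h}}(\tau')/\sigma(L'_{\bar{g}}))=\sigma\big({\rm qftp}_{\mathcal{L}_{S}}(q_{\bar{h}}(x_1),\dots,q_{\bar{h}}(\tau)/L'_{\bar{g}})\big)$ for every $\bar{h}$; assembling these over all $\bar{h}$ and invoking Proposition \ref{tp} gives ${\rm qftp}_{\mathcal{L}_p}(x_1,\dots,x_m,\tau/{\bf q}_0)={\rm qftp}_{\mathcal{L}_p}(x'_1,\dots,x'_m,\tau'/{\bf q}_0)$, as required.

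One point that needs care, and which is the reason Condition \ref{omega2} is phrased the way it is: the extension $\tau$ must genuinely be handled one generic point at a time, because the $\aleph_0$-homogeneity axiom II.2 only requires extending a finite partial isomorphism by a single element of the closure of the current domain. When $q(\tau)$ is algebraic over $L$ but $\tau$ lies in a new orbit, $L'/L$ is algebraic, so $L'$ has transcendence degree $1$ over $F_0$ — the "transcendence degree $1$" clause in Condition \ref{omega2} is precisely this; one should also note that the closure operator $\rm cl_{\bf q}=q^{-1}\circ acl\circ q$ means every element we are asked to adjoin is either in the algebraic-closure-of-the-domain situation just described (Hodge-generic case), a special point (unique choice, by Theorem \ref{dcl}, nothing to prove), or an element of $S(F)$ (handled exactly as the corresponding case in Proposition 3.1, by realising a quantifier-free $\mathcal{L}_S$-type). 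The main obstacle is bookkeeping rather than mathematics: one must keep straight the three nested systems of fields ($F_0\subset L\subset L'\subset L'_{\bar{g}}\subset L'_{\bar{h}}$) and the corresponding quotients $\overline{\Gamma}$, $\overline{\Gamma}_{\bar{g}}$, $\Gamma_{\bar{g}}/\Gamma_{\bar{h}}$, and check that "finite index into $\overline{\Gamma}^{m+1}$" does upgrade to "surjective onto $\overline{\Gamma}_{\bar{g}}^{m+1}$ for some $\bar{g}$" — this uses that the index into $\Gamma^{m+1}/\Gamma_{\bar{g}}^{m+1}$ is nondecreasing in $\bar{g}$ and stabilises at the (finite) index into $\overline{\Gamma}^{m+1}$, exactly as in the proof of Proposition \ref{numoftypes}.
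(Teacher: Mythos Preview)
Your proposal takes exactly the paper's approach: the paper's proof is a single sentence stating that the argument is identical to that of Lemma~\ref{L1}, ``this time working over the closed subset and applying Condition~\ref{omega2}'', and your write-up is a faithful, detailed expansion of precisely that plan (base field $E^{\ab}(\Sigma)$ replaced by the algebraically closed $F_0$ of the countable submodel, Condition~\ref{omega1} replaced by Condition~\ref{omega2}, steps (i)--(iv) reproducing the surjectivity-transfer argument verbatim).

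One point that deserves tightening: you assert that $L':=F_0(q(x_1),\dots,q(x_m),q(\tau))$ has transcendence degree~$1$ over $F_0$, but algebraicity of $q(\tau)$ over $L$ only gives $\operatorname{tr.deg}(L'/F_0)=\operatorname{tr.deg}(L/F_0)$, and nothing in your setup forces the latter to be~$1$ when $m\geq 2$ and the $q(x_i)$ are algebraically independent over $F_0$. Condition~\ref{omega2} is stated under the transcendence-degree-$1$ hypothesis, and the verification in \S5 genuinely uses it (for instance, step~(b) in the modular-curve case relies on $F(j_1,\dots,j_m)$ being a finite extension of each $F(j_i)$). The paper's one-line proof is equally silent on this, so you are matching its level of detail; but for a fully rigorous account you should indicate why one may reduce to the case where the finite tuple over the closed model has $\operatorname{cl}$-dimension~$1$.
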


\begin{proof}
The argument is identical to the proof Theorem \ref{L1}, this time working over the closed subset and applying Condition \ref{omega2}. 
\end{proof}

\begin{proof}{\bf(Lemma \ref{A1})}
By Lemmas \ref{L1} and \ref{L2}, this is immediate from \cite{K}, Corollary 2.2.
\end{proof}

\begin{rem}
Suppose we wanted to consider the case of $F$ having finite transcendence degree $n$. We could replace the $\Th^{\infty}_{\textsf{SF}}({\bf p})$-axiom of infinite transcendence degree with the axiom of having transcendence degree $n$. By \cite{K}, Corollary 2.2, Conditions \ref{omega1} and \ref{omega2} imply that this theory is $\aleph_0$-categorical.
\end{rem}

\begin{proof}{\bf (Theorem \ref{AK})}
Under our assumptions, the model ${\bf p}$ with the pregeometry $\rm cl_p$ is a quasiminimal pregeometry structure and the class $\mathcal{K}({\bf p})$ defined in \cite{BHHKK} is quasiminimal. So, by the
main result Theorem 2.2 of loc. cit., $\mathcal{K}({\bf p})$ has a unique model (up to isomorphism)
in each infinite cardinality. In particular, $\mathcal{K}({\bf p})$ contains a unique model ${\bf p_0}$ of
cardinality $\aleph_0$. 

Let $\mathcal{K}$ be the class of models of $\Th^{\infty}_{\textsf{SF}}({\bf p})$. It is clear
that $\mathcal{K}({\bf p})\subset\mathcal{K}$ since ${\bf p}\in\mathcal{K}$. Thus, to prove the theorem we need to show that $\mathcal{K}\subset\mathcal{K}({\bf p})$. 

By Lemma \ref{A1}, $\mathcal{K}$ has a unique model ${\bf q_0}$ of cardinality $\aleph_0$. As a class of models of an $\mathcal{L}_{\omega_1,\omega}$-sentence, $\mathcal{K}$ is also an abstract elementary class with
Lowenheim-Skolem number $\aleph_0$. So, by the downward Lowenheim-Skolem theorem, every model
in $\mathcal{K}$ is a direct limit of copies of the unique
model of cardinality $\aleph_0$ (with elementary embeddings as morphisms). Finally, all the embeddings in $\mathcal{K}$ are closed with respect to the pregeometry. Therefore, 
\begin{align*}
\mathcal{K}=\mathcal{K}({\bf q_0})=\mathcal{K}({\bf p_0})=\mathcal{K}({\bf p}).
\end{align*}
\end{proof}

\section{Arithmetic}

Finally, we conclude the proof of Theorem \ref{main} by proving the following model-theoretic result:

\begin{teo}\label{mainmodel}
Let $S$ be a modular or Shimura curve and let $\bf p$ be the corresponding two-sorted structure. Then $\Th^{\infty}_{\textsf{SF}}({\bf p})$ is $\kappa$-categorical for all infinite cardinalities $\kappa$.
\end{teo}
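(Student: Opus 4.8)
The plan is to obtain Theorem \ref{mainmodel} directly from Theorem \ref{AK}, so that everything reduces to verifying the two arithmo-geometric Conditions \ref{omega1} and \ref{omega2}; both assert that a homomorphism $\Aut(\CC/L)\to\overline{\Gamma}^m$ has finite index, and the first task is to make $\overline{\Gamma}$ and this homomorphism explicit. Because the groups $\Gamma_{\bar g}$ are finite intersections of conjugates of the congruence subgroup $\Gamma$, the completion $\overline{\Gamma}=\varprojlim_{\bar g}\Gamma/\Gamma_{\bar g}$ is the \emph{congruence} completion of $\Gamma$; by strong approximation for the simply connected group $G^{\der}$ (whose real points are the non-compact group $\SL_2(\RR)$) it is an open compact subgroup of $\SL_2(\hat{\ZZ})$ in the modular case, and of the norm-one adelic group attached to $B$ in the Shimura case. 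Moreover, for a point $p(x)\in S(L)$ the tower of finite covers of $S$ indexed by the $\Gamma_{\bar g}$, pulled back along $p(x)$, is (up to the standard identifications, compatibly with the $G^{\ad}(\QQ)^+$-action used in the axioms) the tower of torsion-level structures on the elliptic curve, respectively QM abelian surface, $A_x$ classified by $p(x)$; hence, after passing from $\Aut(\CC/L)$ to $\Gal(\overline{L}/L)$ (the image factors through the latter) and up to commensurability, the homomorphism is the adelic Galois representation on $\varprojlim_N A_x[N]$, restricted to the subgroup fixing all roots of unity, so that its image indeed lies in the $\SL_2$-type group above since $L\supseteq E^{\ab}(\Sigma)\supseteq\QQ^{\ab}$. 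For a tuple of $m$ points this is the product representation.

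With this dictionary, Condition \ref{omega1} becomes an open-image statement for $A_{x_i}$ over the finitely generated field $L$ (an abelian-by-finitely-generated extension of $\QQ$). Each $x_i$ being Hodge-generic, $A_{x_i}$ is non-CM, so by Serre's open image theorem — together with the elementary determinant argument showing that adjoining $\QQ^{\ab}$, or the algebraic coordinates of the special points, still leaves the image open in the $\SL_2$-part — the image of $\Aut(\CC/L)$ in each factor of $\overline{\Gamma}^m$ is open. Since the $x_i$ lie in distinct $G^{\ad}(\QQ)^+$-orbits, the $A_{x_i}$ are pairwise non-isogenous, and a standard argument combining this openness, Goursat's lemma and Faltings' isogeny theorem shows that the image in the product $\overline{\Gamma}^m$ is open. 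In the Shimura-curve case one replaces Serre's theorem by its analogue for QM abelian surfaces (``fake elliptic curves''), whose Mumford--Tate group is the relevant $\QQ$-form of $\GL_2$, together with the same independence argument.

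For Condition \ref{omega2} the field $L$ has transcendence degree $1$ over the algebraically closed field $F$, so $L=F(C)$ for a smooth projective curve $C/F$, and the points $p(x_i)\in S(L)$ give non-constant morphisms $\pi_i\colon C\to S$. The relevant homomorphism is now the monodromy of the families obtained by pulling back the universal elliptic curve (resp.\ abelian surface) along the $\pi_i$ over a suitable open subcurve of $C$, and since $F$ is algebraically closed this is the geometric monodromy. The geometric monodromy of the universal family over $S$ is the whole congruence completion $\overline{\Gamma}$ (the image of $\pi_1$ of the open modular/Shimura curve is dense in $\overline{\Gamma}$), so its pullback along the non-constant $\pi_i$ remains of finite index; independence across the factors again follows from Goursat, the point being that a non-trivial correspondence between $\pi_i^{\ast}$ and $\pi_j^{\ast}$ of the universal family would be a Hecke correspondence and would force $x_i$ and $x_j$ into the same $G^{\ad}(\QQ)^+$-orbit, contrary to hypothesis. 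Hence $\Aut(\CC/L)\to\overline{\Gamma}^m$ has finite index, completing the verification and, via Theorem \ref{AK}, the proof.

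I expect the main obstacle to be the passage from openness of each individual factor to openness of the image in the product $\overline{\Gamma}^m$ — the ``independence of Galois representations'' step — both over the finitely generated base of Condition \ref{omega1} and, more delicately, in the function-field setting of Condition \ref{omega2}, where one must correctly identify the geometric monodromy of the pulled-back families and rule out spurious correspondences using the distinct-orbit hypothesis. A secondary technical point is the bookkeeping in the quaternionic case: pinning down the precise open subgroup of the adelic group in which ``open image'' is to be interpreted, controlling behaviour at the primes dividing $\disc B$, and checking that the identifications of $\overline{\Gamma}$ with the congruence completion and of the cover tower with the torsion tower are compatible with the $G^{\ad}(\QQ)^+$-action as the axioms require.
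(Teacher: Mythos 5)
Your overall strategy matches the paper's exactly: reduce to Conditions \ref{omega1} and \ref{omega2} via Theorem \ref{AK}, identify $\overline{\Gamma}$ with a congruence completion, translate the homomorphisms into adelic Galois representations on Tate modules of elliptic (resp.\ QM abelian) surfaces, and verify openness via Serre/Ohta together with a Goursat-plus-Ribet argument for independence. Within Condition \ref{omega1}, though, you gloss over the passage from $\QQ^{\ab}(j_1,\dots,j_m)$ to $\QQ^{\ab}(\Sigma)(j_1,\dots,j_m)$ by calling it a ``determinant argument''; the determinant argument only handles $\QQ^{\ab}$ (via $\det\circ\rho$ being cyclotomic), while adjoining $\Sigma$ requires a genuinely separate step. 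The paper's route is: prove that $\QQ^{\ab}(\Sigma)$ is abelian over $\QQ^{\ab}$ (because $\QQ(\tau, j(\tau))$ is the Hilbert class field of the imaginary quadratic field $\QQ(\tau)$, and $\QQ(\tau)\subset\QQ^{\ab}$), then use that the closure of the commutator subgroup of the open image $U$ is open, so the image of $\Aut(\CC/\QQ^{\ab}(\Sigma)(j_1,\dots,j_m))$ contains an open subgroup. In the Shimura-curve case you should also be aware that the paper needs Cadoret's theorem (open at one prime implies open adelically) to bootstrap Ohta's result, which you do not mention.

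For Condition \ref{omega2} you take a genuinely different route, and here there is a gap. You argue that a Goursat obstruction between two geometric monodromy factors ``would be a Hecke correspondence'' forcing $x_i$ and $x_j$ into the same $G^{\ad}(\QQ)^+$-orbit. That inference is plausible but is not established: passing from a Goursat graph of profinite-group quotients to an actual isogeny of the pulled-back families over $L = F(C)$ requires a Tate-conjecture / isogeny-theorem input over a function field whose constant field $F$ is a large (non-finitely-generated) algebraically closed field, plus a careful analysis of what Goursat's graph actually gives at finite level versus adelically. The paper avoids all of this. Its key move — which your proposal misses — is step (a): $\Aut(\CC/F(j_1,\dots,j_m))$ is \emph{normal} in $\Aut(\CC/\QQ^{\ab}(j_1,\dots,j_m))$. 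Combined with the openness already established in Condition \ref{omega1} for the larger group, this normality rules out the Goursat graph case both modulo $p$ (simplicity of $\PSL_2(\FF_p)$ plus normality) and at the Lie-algebra level for every prime, and then Ribet's Lemma 3.4 assembles the adelic statement. In effect, the paper derives Condition \ref{omega2} by leveraging Condition \ref{omega1} through normality, rather than by proving an independent geometric independence theorem. You correctly flag this independence step as the ``main obstacle'', but as written your argument asserts it rather than proving it.
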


By Theorem \ref{AK}, we must verify the Conditions \ref{omega1} and \ref{omega2} in the case that $S$ is either a modular or Shimura curve. We treat first the case of a modular curve.

\subsection{Conditions \ref{omega1} and \ref{omega2} for modular curves}

Recall that $G$ is $\GL_2$ and $X^+$ is the upper half-plane $\HH$. The reflex field is $\QQ$ whose maximal abelian extension $\QQ^{\ab}$ is obtained by adjoining all roots of unity. Since $K$ is contained in $\GL_2(\hat{\ZZ})$, $\Gamma$ is contained in $\SL_2(\ZZ)$.

Consider the quotients $\Gamma/\Gamma(N)$ for those $\Gamma(N)$ contained in $\Gamma$. For any tuple $\bar{g}:=(e,g_1,\ldots,g_n)$ of elements belonging to $\GL_2(\QQ)^+$ such that $\Gamma_{\bar{g}}$ is a normal subgroup of $\Gamma$, and an $N$ such that $\Gamma(N)$ is contained in $\Gamma_{\bar{g}}$, we have a short exact sequence
\begin{align*}
\{1\}\rightarrow\Gamma_{\bar{g}}/\Gamma(N)\rightarrow\Gamma/\Gamma(N)\rightarrow\Gamma/\Gamma_{\bar{g}}\rightarrow\{1\}.
\end{align*}
If we let 
\begin{align*}
\hat{\Gamma}:=\varprojlim_N\Gamma/\Gamma(N)
\end{align*}
then, since the groups $\Gamma_{\bar{g}}/\Gamma(N)$ are finite, the induced map $\hat{\Gamma}\rightarrow\overline{\Gamma}$ is surjective, by the Mittag-Leffler condition. 

Let $\tau_1,\ldots,\tau_m\in\HH$ be a collection of Hodge-generic points in distinct $\GL_2(\QQ)^+$ orbits, inducing a conjugacy class of homomorphisms
\begin{align*}
\Aut(\CC/L)\rightarrow\overline{\Gamma}^m
\end{align*}
where we denote by $L$ the field generated over $\QQ^{\ab}(\Sigma)$ by the coordinates of the $p(\tau_i)$. In the same way, the tower of modular curves associated to the $\Gamma(N)$ gives rise to another conjugacy class of homomorphisms
\begin{align}\label{4}
\Aut(\CC/L)\rightarrow\hat{\Gamma}^m
\end{align}
and any two homomorphisms corresponding to compatible points in the fibre commute with the surjection $\hat{\Gamma}\rightarrow\overline{\Gamma}$. Therefore, if we show that (\ref{4}) has finite index, then Condition \ref{omega1} follows.

Note that $\hat{\Gamma}$ is a finite index subgroup of $\SL_2(\hat{\ZZ})$ and it is well known that we can associate with $\tau_i$ an elliptic curve $E_i$ defined over $\QQ(j_i)$, where $j_i:=j(E_i)=j(\tau_i)$, such that (\ref{4}) is the restriction of
\begin{align}\label{5}
\Aut(\CC/\QQ^{\ab}(j_1,\ldots,j_m))\rightarrow\SL_2(\hat{\ZZ})^m
\end{align}
coming from the Galois representation on the Tate module of the product of the $E_i$. In the case that the $j_i$ are algebraic, then the image of (\ref{5}) is open by \cite{R}, Theorem 3.5. Note that we are using a theorem of Faltings here that, because the $E_i$ are mutually non-isogenous (since the $\tau_i$ belong to distinct $\GL_2(\QQ)^+$ orbits), the systems of $p$-adic representations associated to any $E_i$ and $E_j$ do not become isomorphic over a finite extension. One  may reduce to the case that the $j_i$ are algebraic by the usual specialisation arguments (see, for example, \cite{P}, Remark 6.12). 

We recall that $\SL_2(\hat{\ZZ})^m$ satisfies the condition of \cite{R}, Lemma 3.4. If $U$ denotes the open image of (\ref{5}) then it necessarily contains an open subgroup $U'$ that is a product of open subgroups in the $m$ factors. By the aforementioned condition, the closure of the commutator subgroup of $U'$ is open in $U'$. It is also contained in the commutator subgroup of $U$ and so the closure of the commutator subgroup of $U$ is open in $U$.

If $\QQ^{\ab}(\Sigma)(j_1,\ldots,j_m)$ were an abelian extension of $\QQ^{\ab}(j_1,\ldots,j_m)$, then the image of $\Aut(\CC/\QQ^{\ab}(\Sigma)(j_1,\ldots,j_m))$ would necessarily contain the commutator subgroup of $U$. However, it is also closed as a compact subset of a Hausdorff topological space. Therefore, it would be open and, therefore, of finite index. To conclude the proof we then restrict to $\Aut(\CC/L)$, which is clearly a finite index subgroup. Therefore, Condition \ref{omega1} follows from the following lemma:

\begin{lem}
$\QQ^{\ab}(\Sigma)(j_1,\ldots,j_m)$ is an abelian extension of $\QQ^{\ab}(j_1,\ldots,j_m)$.
\end{lem}

\begin{proof}
It clearly suffices to show that $\QQ^{\ab}(\Sigma)$ is an abelian extension of $\QQ^{\ab}$. Note that $\Sigma$ is the set of $j(\tau)$ for all special points $\tau\in\HH$. Any such $\tau$ generates an imaginary quadratic field $\QQ(\tau)$ and $\QQ(\tau,j(\tau))$ is the Hilbert class field of $\QQ(\tau)$. In particular, it is abelian. On the other hand, since $\QQ(\tau)$ is quadratic, it is contained in $\QQ^{\ab}$. Therefore, $\QQ^{\ab}(j(\tau))$ is an abelian extension of $\QQ^{\ab}$. Since the compositum of abelian extensions is abelian, the result follows.
\end{proof}

Therefore, it remains to verify Condition \ref{omega2}. To that end, we let $F$ denote an algebraically closed subfield of $\CC$ and assume that $p(\tau_i)\notin S(F)$ for all $i\in\{1,\ldots,m\}$. Furthermore, we let $L$ denote the field generated over $F$ by the coordinates of the $p(\tau_i)$ and assume that the transcendence degree of $L$ over $F$ is $1$. We return to the homomorphism (\ref{5}) and make the following deductions:

\begin{enumerate}

\item[(a)] $F(j_1,\ldots,j_m)$ is a normal extension of $\QQ(j_1,\ldots,j_m)$ since it is isomorphic to the extension $F$ over $/F\cap\QQ(j_1,\ldots,j_m)$ and $F$ is a normal extension $\QQ$. Therefore, $\Aut(\CC/F(j_1,\ldots,j_m))$ is a normal subgroup of $\Aut(\CC/\QQ^{\ab}(j_1,\ldots,j_m))$.

\item[(b)] The image of the projection of the homomorphism
\begin{align*}
\Aut(\CC/F(j_1,\ldots,j_m))\rightarrow\SL_2(\hat{\ZZ})^m
\end{align*}
to any of the individual factors has finite index. This is simply because $\Gamma(N)\backslash\HH$ and the projection to $\Gamma(1)\backslash\HH$ are both defined over $F$, as are the automorphisms induced by the elements of $\SL_2(\ZZ/N\ZZ)$. Furthermore, $F(j_1,\ldots,j_m)$ is at most a finite extension of $F(j_i)$, the function field of the base. The image is closed because it is a compact subset of a Hausdorff topological space. Therefore, it is open.

\item[(c)] We claim that the projection of the homomorphism
\begin{align*}
\Aut(\CC/F(j_1,\ldots,j_m))\rightarrow\SL_2(\FF_p)^m
\end{align*}
to the product of any two factors is surjective for almost all primes $p$. To see this note that the projection to any single factor is surjective for almost all primes by (b). Therefore, let
\begin{align*}
\Aut(\CC/F(j_1,\ldots,j_m))\rightarrow\SL_2(\FF_p)^2
\end{align*}
be a projection as in the claim. For almost all primes $p$, Goursat's Lemma states that the image is either full or its image in
\begin{align*}
\SL_2(\FF_p)/\ker(\pi_2)\times\SL_2(\FF_p)/\ker(\pi_1)
\end{align*}
is the graph of an isomorphism
\begin{align*}
\SL_2(\FF_p)/\ker(\pi_2)\cong\SL_2(\FF_p)/\ker(\pi_1),
\end{align*}
where $\pi_1$ and $\pi_2$ denote the first and second projections, respectively. However, if $p>3$, then $\SL_2(\FF_p)/\{\pm e\}$ is simple and so $\ker(\pi_1)$ and $\ker(\pi_2)$ are equal to $\SL_2(\FF_p)$, $\{\pm e\}$ or $\{e\}$. The latter two possibilities can occur for only finitely many primes since the projection of the homomorphism
\begin{align*}
\Aut(\CC/\QQ^{\ab}(j_1,\ldots,j_m)\rightarrow\SL_2(\FF_p)^m,
\end{align*}
to the product of any two factors is surjective for almost all primes and $\Aut(\CC/F(j_1,\ldots,j_m))$ is normal in $\Aut(\CC/\QQ^{\ab}(j_1,\ldots,j_m)$ by (a).

\item[(d)] The projection of the homomorphism
\begin{align*}
\Aut(\CC/F(j_1,\ldots,j_m))\rightarrow\SL_2(\ZZ_p)^m,
\end{align*}
to the product of any two factors is surjective for almost all primes $p$ by \cite{S}, \S6, Lemma 10 and (c).

\item[(e)] We claim that the projection of the homomorphism
\begin{align*}
\Aut(\CC/F(j_1,\ldots,j_m))\rightarrow\SL_2(\ZZ_p)^m,
\end{align*}
to the product of any two factors has open image for all primes $p$. Indeed, for any such projection, let $\mathfrak{a}_p$ denote the $\QQ_p$-lie algebra of the image of $\Aut(\CC/F(j_1,\ldots,j_m))$ in 
\begin{align*}
\SL_2(\ZZ_p)\times\SL_2(\ZZ_p).
\end{align*} 
By (b), it is a $\QQ_p$-lie subalgebra of $\mathfrak{sl}_2\times\mathfrak{sl}_2$ that surjects on to each factor. Therefore, as in \cite{S}, \S6, Lemma 7, either $\mathfrak{a}_p=\mathfrak{sl}_2\times\mathfrak{sl}_2$ or it is the graph of a $\QQ_p$-algebra isomorphism $\mathfrak{sl}_2\cong\mathfrak{sl}_2$. However, the latter is impossible using fact (a) that $\Aut(\CC/F(j_1,\ldots,j_m))$ is a normal subgroup of $\Aut(\CC/\QQ^{\ab}(j_1,\ldots,j_m)$.   

\end{enumerate}

Therefore, the image of the projection of the homomorphism
\begin{align*}
\Aut(\CC/L)\rightarrow\SL_2(\hat{\ZZ})^m
\end{align*}
to the product of any two factors is open by (d) and (e). Therefore, the image of this homomorphism itself is open by \cite{R}, Lemma 3.4 and Remark 3. Therefore, it is of finite index. 

\subsection{Conditions \ref{omega1} and \ref{omega2} for Shimura curves}

Now we deal with Shimura curves. Recall that $G$ is $\Res_{B/\QQ}\GG_{m,B}$, where $B$ is a quaternion division algebra over $\QQ$ split at infinity and $X^+$ is again the upper half-plane $\HH$. The reflex field is also $\QQ$. 

We can choose a basis of $B$ such that, under the induced embedding of $G$ in $\GL_4$, $K$ is contained in $\GL_4(\hat{\ZZ})$. Given this representation, we can define the principal congruence subgroups $\Gamma(N)$ of $G$. We can also define the congruence subgroups of $G$ in the usual way as those subgroups of $G(\QQ)$ containing some $\Gamma(N)$ as a subgroup of finite index. The latter notion does not depend on the representation, of course, and $\Gamma$ is always a congruence subgroup. We define $G':=G\cap\SL_4$, which is the group $\Res_{B/\QQ}\GG_{m,B}$ of norm $1$ elements in $B$.

Let $\tau_1,\ldots,\tau_m\in\HH$ be a collection of Hodge-generic points in distinct $G(\QQ)_+$ orbits. Let $L$ denote the field generated over $\QQ^{\ab}(\Sigma)$ by the coordinates of the $p_i:=p(\tau_i)$. By abuse of notation, for any field $F$, we will denote by $F(p_1,\ldots,p_m)$ the field obtained from $F$ by adjoining the coordinates of the $p_i$. Exactly as before, we may pass to the (conjugacy class of a) homomorphism
\begin{align}\label{6}
\Aut(\CC/L)\rightarrow\hat{\Gamma}^m,
\end{align}
noting that $\hat{\Gamma}$ is an open subgroup of $G'(\AAA_f)$.

Again one can associate with $\tau_i$ an abelian surface $A_i$ with quaternionic multiplication defined over $\QQ(p_i)$ such that (\ref{6}) is the restriction of
\begin{align}\label{7}
\Aut(\CC/\QQ^{\ab}(p_1,\ldots,p_m))\rightarrow G'(\AAA_f)^m
\end{align}
coming from the Galois represention on the Tate module of the product of the $A_i$. In the case that the $p_i$ are algebraic and $m=1$, Ohta proved in \cite{O} that the image of (\ref{7}) was open. In the case that the $p_i$ are not algebraic, one can use specialisation arguments as before.

A recent result of Cadoret (see \cite{C}, Theorem 1.2) states that, in general, the image of (\ref{7}) is open if and only if it is open at some prime $p$. Therefore, we choose a prime $p$ such that $G'_{\QQ_p}$ is isomorphic to $\SL_{2,\QQ_p}$. Then the fact that the projection of
\begin{align}\label{8}
\Aut(\CC/\QQ^{\ab}(p_1,\ldots,p_m))\rightarrow\SL_2(\QQ_p)^m
\end{align}
to the product of any two factors has open image can be proved using Ohta's result, precisely as in \cite{S}, Lemme 7. By \cite{R}, Lemma 3.4 and Remark 3, the image of (\ref{8}) itself is open. Therefore, the result of Cadoret implies that the image of (\ref{7}) is open.

Since $G'_{\QQ_p}$ is semisimple, and isomorphic to $\SL_{2,\QQ_p}$ for all but finitely many primes, we can conclude that the image of $\Aut(\CC/\QQ^{\ab}(\Sigma)(p_1,\ldots,p_m))$ in $G'(\AAA_f)^m$ is open by \cite{R}, Remark 2 and Remark 3, precisely as in the case of modular curves given the following lemma:

\begin{lem}
$\QQ^{\ab}(\Sigma)(p_1,\ldots,p_m)$ is an abelian extension of $\QQ^{\ab}(p_1,\ldots,p_m)$.
\end{lem}

\begin{proof}
Again, it suffices to show that $\QQ^{\ab}(\Sigma)$ is an abelian extension of $\QQ^{\ab}$. A special point $\tau$ corresponds to an abelian surface that is isogenous to the square of an elliptic curve with complex multiplication by an imaginary quadratic field $k$. By a theorem of Shimura, $k(p(\tau))$ is the Hilbert class field of $k$. Again, $k$ is contained in $\QQ^{\ab}$ so $\QQ^{\ab}(p(\tau))$ is an abelian extension of $\QQ^{\ab}$. The claim follows after taking the compositum over all special points. 
\end{proof}

Therefore, it remains to verify Condition \ref{omega2}. Again, we let $F$ denote an algebraically closed subfield of $\CC$ and assume that $p_i\notin S(F)$ for all $i\in\{1,\ldots,m\}$. Furthermore, we let $L$ denote the field $F(p_1,\ldots,p_m)$ and assume that the transcendence degree of $L$ over $F$ is $1$. We return to the homomorphism (\ref{7}) and make the following deductions:

\begin{enumerate}

\item[(a)] As before, $\Aut(\CC/L)$ is a normal subgroup of $\Aut(\CC/\QQ^{\ab}(p_1,\ldots,p_m))$.

\item[(b)] Again, the image of the projection of the homomorphism
\begin{align*}
\Aut(\CC/L)\rightarrow G'(\AAA_f)^m
\end{align*}
to any of the individual factors is open.

\item[(c)] For almost all primes $p$ such that $G'_{\QQ_p}$ is isomorphic to $\SL_{2,\QQ_p}$, the projection of the homomorphism
\begin{align*}
\Aut(\CC/L))\rightarrow\SL_2(\FF_p)^m
\end{align*}
to the product of any two factors is surjective. This is precisely as before.

\item[(d)] Again, for almost all primes $p$ such that $G'_{\QQ_p}$ is isomorphic to $\SL_{2,\QQ_p}$, the projection of the homomorphism
\begin{align*}
\Aut(\CC/L)\rightarrow\SL_2(\ZZ_p)^m,
\end{align*}
to the product of any two factors is surjective.

\item[(e)] We claim that the projection of the homomorphism
\begin{align*}
\Aut(\CC/L)\rightarrow G'(\QQ_p)^m,
\end{align*}
to the product of any two factors has open image for all primes $p$. Indeed let $\mathfrak{g}$ denote the lie algebra of $G'_{\QQ_p}$ and, for any projection as above, let $\mathfrak{a}_p$ denote the $\QQ_p$-lie algebra of the image of $\Aut(\CC/L)$ in 
\begin{align*}
G'(\QQ_p)\times G'(\QQ_p).
\end{align*} 
By (b), it is a $\QQ_p$-lie subalgebra of $\mathfrak{g}\times\mathfrak{g}$ that surjects on to each factor. Therefore, by Goursat's Lemma for algebras, either $\mathfrak{a}_p=\mathfrak{g}\times\mathfrak{g}$ or the projections to either factor are proper two-sided ideals $\mathfrak{h}_1$ and $\mathfrak{h_2}$ of $\mathfrak{g}$ and the image of $\mathfrak{a}_p$ in $\mathfrak{g}/\mathfrak{h}_1\times\mathfrak{g}/\mathfrak{h}_2$ is the graph of a $\QQ_p$-algebra isomorphism $\mathfrak{g}/\mathfrak{h}_1\cong\mathfrak{g}/\mathfrak{h}_2$. However, the latter is impossible by (a) since the lie algebra of the image of $\Aut(\CC/\QQ^{\ab}(p_1,\ldots,p_m))$ is $\mathfrak{g}\times\mathfrak{g}$.

\end{enumerate} 

Therefore, the image of the projection of the homomorphism
\begin{align*}
\Aut(\CC/L)\rightarrow G'(\AAA_f)^m
\end{align*}
to the product of any two factors is open by (d) and (e). Therefore, the image of this homomorphism itself is open by \cite{R}, Lemma 3.4, Remark 2 and Remark 3. Therefore, (\ref{6}) has finite index.

\subsection{Reduction to the torsion free case}\label{red}

Suppose that $(G,X)$ is a Shimura datum and that Theorem \ref{main} holds for any Shimura variety defined by a congruence subgroup that is torsion free in $G^{\ad}(\QQ)^+$. Consider an arbitrary congruence subgroup $\Gamma$ contained in $G(\QQ)_+$ that, nonetheless, gives rise to a locally symmetric variety $\Gamma\backslash X^+$ with a canonical model $S$ defined over $E^{\ab}$. We denote by
\begin{align*}
p:X^+\rightarrow S(\CC)
\end{align*}
the usual map and consider another map $q$ satisfying the standard fibres, special points and modularity conditions.

One can choose finitely many elements $g_1,\ldots,g_n\in G(\QQ)_+$ such that
\begin{align*}
\Gamma':=\Gamma\cap g^{-1}\Gamma g_1\cap\cdots\cap g^{-1}_n\Gamma g_n
\end{align*}
is a congruence subgroup whose image in $G^{\ad}(\QQ)^+$ is torsion free. Therefore, $\Gamma'\backslash X^+$ is a locally symmetric variety possessing a canonical model $S'$ defined over $E^{\ab}$ and the natural map
\begin{align*}
\Gamma'\backslash X^+\rightarrow\Gamma\backslash X^+
\end{align*}
comes from an algebraic map $\pi:S'\rightarrow S$ defined over $E^{\ab}$. 

We define
\begin{align*}
p':X^+\rightarrow S'(\CC):x\mapsto (p(x),p(g_1x),\ldots,p(g_nx))
\end{align*}
and similarly
\begin{align*}
q':X^+\rightarrow S'(\CC):x\mapsto (q(x),q(g_1x),\ldots,q(g_nx)),
\end{align*}
which automatically satisfies the standard fibres, special points and modularity conditions. We also note that $p'$ is the usual map associated with $S'$, $p=\pi\circ p'$ and $q=\pi\circ q'$. Applying Theorem \ref{main}, we obtain a $G^{\ad}(\QQ)^+$-equivariant bijection $\varphi$ and an automorphism $\sigma$ of $\CC$ fixing $E^{\ab}(\Sigma)$ such that
\begin{center}
$\begin{CD}
X^+ @>\varphi>> X^+\\
@VVp' V @VVq' V\\
S(\CC) @>\sigma>> S(\CC)
\end{CD}$
\end{center}
commutes. Since $\pi$ is defined over $E^{\ab}$, we obtain a commutative diagram
\begin{center}
$\begin{CD}
X^+ @>\varphi>> X^+\\
@VVp V @VVq V\\
S(\CC) @>\sigma>> S(\CC).
\end{CD}$
\end{center}

\section*{Acknowledgements}
The authors would like to thank Martin Bays, Daniel Bertrand, Minhyong Kim, Angus Macintyre, Jonathan Pila, Andrei Yafaev and Boris Zilber for many interesting discussions during the preparation of this article. Special thanks from both authors must go to Misha Gavrilovich for invitations to the Kurt G\"odel Research Center, Vienna, and the first author would also like to thank Misha Gavrilovich for introducing him to the problem. The first author would like to thank Jonathan Kirby and the EPSRC (grant number EP/L006375/1) for the opportunity to visit the University of East Anglia. He would also like to thank the Institut des Hautes Etudes Scientifiques and University College London. The second author would like to thank the University of East Anglia and the University of Oxford. Finally, both authors would like to heartily thank the referee for their many insightful comments.

\end{document}